\newcommand{\C}{\mathbb{C}}
\newcommand{\cA}{\mathcal{A}}
\newcommand{\cohshv}[1]{\widetilde{#1}}
\newcommand{\divides}{\mid}
\newcommand{\dual}[1]{{#1}^{\vee}}
\newcommand{\dualsigma}{\dual{\sigma}}
\newcommand{\faceof}{\preceq}
\newcommand{\gitquot}[2]{#1 // #2}
\newcommand{\glreg}[1]{\glsec{#1}{\strshf{#1}}}
\newcommand{\glsec}[2]{\Gamma\left( #1,\, #2 \right)}
\newcommand{\Gm}{\mathbb{G}_{\mathrm{m}}}
\newcommand{\inpr}[2]{\left\langle#1,\,#2\right\rangle}
\newcommand{\invelts}[1]{#1^{\times}}
\newcommand{\mProj}{\Proj_{\textrm{MH}}}
\newcommand{\MQ}{M_{\Q}}
\newcommand{\N}{\mathbb{N}}
\newcommand{\NQ}{N_{\Q}}
\newcommand{\NReal}{N_{\R}}
\newcommand{\plD}{\mathfrak{D}}
\newcommand{\Polg}[2]{\Polgen{}{#1}{#2}}
\newcommand{\Polgen}[3]{\Pol^{#1}_{#2}\!\left( #3 \right)}
\newcommand{\Polpg}[2]{\Polgen{+}{#1}{#2}}
\newcommand{\Polpq}{\Polpg{\sigma}{\NQ}}
\newcommand{\Polq}{\Polg{\sigma}{\NQ}}
\newcommand{\ppdivQ}{\ppdiv_{\Q}}
\newcommand{\Q}{\mathbb{Q}}
\newcommand{\Qpos}{\Q_{\geq 0}}
\newcommand{\R}{\mathbb{R}}
\newcommand{\restrict}[2]{\left.#1\right|_{#2}}
\newcommand{\set}[2]{\left\{#1\,\middle|\,#2\right\}}
\newcommand{\setl}[1]{\left\{\,#1\,\right\}}
\newcommand{\sizeof}[1]{\left|#1\right|}
\newcommand{\smstbl}[2]{#1_{\textup{\textrm{ss}}}(#2)}
\newcommand{\strshf}[1]{\mathscr{O}_{#1}}
\newcommand{\tvarieties}{\text{$T$-varieties}}
\newcommand{\Z}{\mathbb{Z}}
\DeclareMathOperator{\CDiv}{CDiv}
\DeclareMathOperator{\eval}{eval}
\DeclareMathOperator{\Hom}{Hom}
\DeclareMathOperator{\image}{image}
\DeclareMathOperator{\Pol}{Pol}
\DeclareMathOperator{\ppdiv}{PPDiv}
\DeclareMathOperator{\Proj}{Proj}
\DeclareMathOperator{\rank}{rank}
\DeclareMathOperator{\relint}{rel\,int}
\DeclareMathOperator{\Spec}{Spec}
\DeclareMathOperator{\tail}{tail}
\DeclareMathOperator{\WDiv}{WDiv}
\numberwithin{equation}{section}
\newtheorem{theorem}[equation]{Theorem}
\newtheorem{lemma}[equation]{Lemma}
\newtheorem{corollary}[equation]{Corollary}
\newtheorem{proposition}[equation]{Proposition}
\theoremstyle{definition}
\newtheorem{definition}[equation]{Definition}
\theoremstyle{remark}
\newtheorem{assumption}[equation]{Assumption}
\newtheorem{remark}[equation]{Remark}
\newtheorem{example}[equation]{Example}
\newtheorem{numpara}[equation]{}
\title[MHS and T-varieties]{Properties of Multihomogeneous Spaces and relation with T-varieties}
\author{Vivek Mohan Mallick}
\email{vmallick@iiserpune.ac.in}
\address{Department of Mathematics, IISER Pune, Dr Homi Bhabha Road, Pashan, Pune 411008, India}
\author{Kartik Roy}
\email{kartik.roy@students.iiserpune.ac.in}
\address{Department of Mathematics, IISER Pune, Dr Homi Bhabha Road, Pashan, Pune 411008, India}
\thanks{The second author is supported by CSIR.}
\subjclass[2010]{Primary 14M25; Secondary 14E05, 14L24}
\keywords{Algebraic Geometry, Multihomogeneous spaces, T-varieties, birational geometry}
\begin{document}
\maketitle

\begin{abstract}
  We study multihomogeneous spaces corresponding to $\mathbb{Z}^n$-graded
  algebras over an algebraically closed field of characteristic $0$ and
  their relation with the description of $T$-varieties.
\end{abstract}

\section*{Introduction}

Algebraic varieties with torus actions, including but not limited to toric
varieties, have been at the centre of much attention for the past few
decades.  During our research we came across various constructions around
such varieties. This paper tries to relate two of these constructions under
some hypothesis.

The first object of interest is a variety with an effective
algebraic torus action. These were studied by various people, for example,
as toric varieties over discrete valuation rings considered by Kempf,
Knudsen, Mumford and Saint-Donat \cite{keknmusd:toroidalembeddingsi}; as a
part of the general case of varieties with the action of an reductive group
by Timash{\"e}v \cite{MR1470147}; and the case of $\C^*$ actions on normal
affine surfaces were studied by Flenner and Zaidenberg \cite{MR2020670} to
name a few. The theory was neatly generalized and combined into a single
theory by Altmann, Hausen and S{\"u}ss (see \cite{ah:affinetvar} for the
affine case and \cite{ahs:gentvar} for the general case). The combinatorial
descriptions of the geometric properties were studied extensively and are
reported in the survey \cite{aipsv:geomtvar}.  There has been quite a bit of
activity in this area in the recent years.

Another concept which drew our attention was that of a multihomogeneous
projective space defined by Brenner and Schr{\"o}er
\cite{breshr:ampfamilies}. These spaces
are generalizations of weighted projective spaces and are divisorial
schemes. Brenner and Schr{\"o}er gave a criterion for a scheme of a finite
type over a noetherian ring to be divisorial in terms of existence of an
embedding of the scheme into a multihomogeneous space associated to a
multigraded algebra \cite[corollary 4.7]{breshr:ampfamilies}. Extending
their work, Zanchetta \cite{zanchetta:embdivsch} proved that the ambient
multihomogeneous space can be chosen to be smooth. Some applications of this
theory can be seen in Kanda \cite{MR4033823}.

This paper delves into the relationship between these two concepts. Digging a
bit deeper, not surprisingly, GIT quotients play a role in both the
theories. We try to follow this link as far as we could.

While studying and working with multihomogeneous spaces we proved some
results generalizing similar results in weighted projective spaces (see, for example, \cite{MR704986} and \cite{MR3307753}). A
criterion for a twisted module, defined in a similar fashion as the twisted
modules on projective varieties, to be a line bundle (theorem
\ref{the:criOXdlb}).  Furthermore, in multihomogeneous spaces, the points
need not correspond to homogeneous prime ideals. This paper proves a
criterion for this to happen (corollary \ref{cor:ptsarepr}).

Normal varieties along with an effective action of a torus $T$ are called
$T$-varieties. Such varieties can be described by partially combinatorial data in
the form of a semiprojective variety $Y$ and a proper polyhedral divisor on
$Y$, which are generalization of usual $\Q$-divisors on $Y$ where rational
linear combinations are replaced by formal sums of the divisors with
polyhedral coefficients.

We show that $Y$ associated to an affine $T$-variety $X = \Spec A$ is
birational to a multihomogeneous space obtained as the Proj of $\Hom(T,
\Gm)$-graded ring $A$, where the grading is obtained by taking isotypical
components under the torus action (see theorem \ref{the:birYmHSA}). We end
the paper by giving one criterion when this birational morphism is an
isomorphism.

The paper is divided into 4 sections. Starting with a review of the theory
of multihomogeneous spaces, the first section goes on to study some
conditions under which the points in the multihomogeneous spaces correspond
to homogeneous prime ideals. This is not true in general as remarked in
\cite[remark 2.3]{breshr:ampfamilies}. We end the section with a condition
under which the multihomogeneous Proj will be normal.

The second section is a review of the theory of \tvarieties{}. This section
is just for clarity of exposition and fixing notation and does not contain
any new results.

The third section defines and proves some results for twisted sheaves over
multihomogeneous spaces. We end with some hypothesis under which the twisted
sheaves are line bundles.

The last section deals with the question about when these constructions yield
the same space. After studying some cases where this fails, we end
with a theorem which describes some sufficient conditions under which they
are isomorphic.

\noindent\textbf{Acknowledgement:} Both the authors thank IISER Pune for its
hospitality where all the work was done. The second author thanks CSIR for
funding his research.

\section{Multihomogeneous Spaces}

In this section we review the theory of multihomogeneous spaces. For more
details regarding multihomogeneous spaces we refer to \cite[section
2]{breshr:ampfamilies}. See also \cite{MR3307753} for results on geometry of multigraded algebras and their properties.

\begin{definition}
  \label{def:prdrlelt}
  Let $D$ be a finitely generated abelian group and
  \begin{equation*}
  A = \bigoplus_{d \in D} A_d 
  \end{equation*}
  be a $D$-graded ring. One says
  that $A$ is \emph{periodic} if $D' = \set{d \in D}{\exists f \in A_d \cap
  \invelts{A}}$, the subgroup of $D$ consisting of degrees of all the
  homogeneous invertible elements in $A$ is a finite index subgroup.  A
  homogeneous element $f$ in a $D$-graded ring $A$ is said to be
  \emph{relevant} if $A_f$ is periodic.  For a relevant element $f$, note
  that the localization $A_f$ is $D$-graded. We shall denote the degree $0$
  part of $A_f$ by $A_{(f)}$.
\end{definition}

The following lemma by Brenner and Schr\"oer is useful.

\begin{lemma}( \cite{breshr:ampfamilies}, lemma 2.1) \label{lem:prdrspec}
  Let $D$ be a finitely generated abelian group and \begin{equation*} A =
  \bigoplus_{d \in D} A_d \end{equation*} be a $D$-graded periodic ring. Then the projection $\Spec A \longrightarrow \Spec (A_{0})$ is a geometric quotient in GIT sense. 
\end{lemma}

\begin{definition}
  \label{def:muhospcs}
  For $D$ and $A$ as in definition \ref{def:prdrlelt}, the grading on
  $A$ corresponds to an action of the diagonalizable group scheme $\Spec
  A_0[D]$ on $\Spec A$. Let $Q$ be the quotient in the category of ringed
  spaces. Now for a relevant element $f$, consider the inclusion
  \begin{equation*}
    D_+(f) = \Spec A_{(f)} \subset Q.
  \end{equation*}
  One defines
  \begin{equation*}
    \mProj A = \bigcup_{
      \substack{
	f \in A \\
	f \text{ is relevant}
      }
    }
    D_+(f) \subset Q.
  \end{equation*}
\end{definition}

\begin{remark}
  \label{rem:ptsinmhs}
  The points in a multihomogeneous projective space $\mProj A$ of a
  $D$-graded ring $A$ correspond to homogeneous ideals in A which may not be
  prime (see \cite[remark 2.3]{breshr:ampfamilies}). However, these ideals
  have the property that all the homogeneous elements in the complement form
  a multiplicatively closed set.
\end{remark}

\begin{proposition} \label{pro:prchompr}
  Suppose $D$ is a free finitely generated $\Z$-module and $A = \bigoplus_{d \in D} A_d$ is a $D$-graded ring. Assume that we have a
  collection of relevant elements $F$ such that
  \begin{equation*}
    \mProj A = \bigcup_{f \in F} \Spec A_{(f)}
  \end{equation*}
  and for each $f \in F$, $\set{d \in D}{d = \deg g \text{ for some homogeneous } g \in
  \invelts{A_f}} = D$. Then every point $p \in \mProj A$ corresponds to a
  homogeneous prime in $A$.
\end{proposition}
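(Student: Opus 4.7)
The plan is to start with $p \in \mProj A$, produce from it a homogeneous prime ideal of $A$, and identify this prime with the homogeneous ideal canonically associated to $p$ by the construction in Remark \ref{rem:ptsinmhs}.

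By hypothesis, $p$ lies in some chart $\Spec A_{(f)}$ with $f \in F$, so $p$ corresponds to an honest prime $\mathfrak{p}_0 \subset A_{(f)}$. The key structural input, forced by the assumption that every $d \in D$ is the degree of a homogeneous unit in $A_f$, is that $A_f$ is (non-canonically) a Laurent polynomial ring over $A_{(f)}$. Concretely, fix a $\Z$-basis $e_1, \ldots, e_n$ of $D$ and choose, for each $i$, a homogeneous unit $u_i \in \invelts{A_f}$ of degree $e_i$. Multiplication by $u_1^{a_1}\cdots u_n^{a_n}$ is then an $A_{(f)}$-module isomorphism $A_{(f)} \xrightarrow{\sim} (A_f)_d$ for every $d = \sum a_i e_i$, and summing over $d \in D$ yields an isomorphism of $D$-graded $A_{(f)}$-algebras
\begin{equation*}
  A_f \;\cong\; A_{(f)}\!\left[u_1^{\pm 1}, \ldots, u_n^{\pm 1}\right].
\end{equation*}

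From here the construction is routine: the extension of the prime $\mathfrak{p}_0 \subset A_{(f)}$ to $A_f$ is the homogeneous ideal $\mathfrak{p}_0 A_f$, and the quotient
\begin{equation*}
  A_f / \mathfrak{p}_0 A_f \;\cong\; \left(A_{(f)}/\mathfrak{p}_0\right)\!\left[u_1^{\pm 1}, \ldots, u_n^{\pm 1}\right]
\end{equation*}
is a Laurent polynomial ring over a domain, hence itself an integral domain (this uses that $D$ is torsion-free). Thus $\mathfrak{p}_0 A_f$ is a homogeneous prime of $A_f$, and its preimage $\mathfrak{p} \subset A$ under the graded localization $A \to A_f$ is simultaneously prime (contraction of a prime) and homogeneous (contraction of a homogeneous ideal along a graded map).

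The main obstacle I anticipate is not the algebra above but the identification step: verifying that $\mathfrak{p}$ is indeed the homogeneous ideal canonically attached to $p$ via the ringed-space quotient construction of Definition \ref{def:muhospcs}. Unwinding that construction, the ideal attached to $p$ should be precisely the set of homogeneous $a \in A$ whose images in $A_f$ lie in $\mathfrak{p}_0 A_f$, so this comes down to checking directly that the two descriptions agree; once that is confirmed, the homogeneous primeness of $\mathfrak{p}$ established above yields the claim.
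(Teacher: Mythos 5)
Your proposal is correct and follows essentially the same route as the paper: both use the hypothesis that every degree in $D$ is realized by a homogeneous unit of $A_f$ to identify $A_f$ as a Laurent polynomial ring $A_{(f)}[u_1^{\pm 1},\dotsc,u_n^{\pm 1}]$, and then match primes of $A_{(f)}$ with homogeneous primes of $A_f$ (hence of $A$ not containing $f$). Your version is somewhat more careful — in particular you make explicit the domain argument for primeness of the extended ideal and flag the identification with the ideal canonically attached to $p$, both of which the paper leaves implicit.
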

\begin{proof}
  Suppose $p \in \Spec A_{(f)}$ for some relevant element $f \in A$. Then
  $A_f$ is periodic and
  \begin{equation*}
    D' = \set{d \in D}{d = \deg g \text{ for some homogeneous } g \in \invelts{A_f}}
  \end{equation*}
  is a free subgroup of $D$ of finite index. Define
  \begin{equation*}
    A_f' = \bigoplus_{d \in D'} (A_f)_d.
  \end{equation*}
  It is easy to see that in this case, $A_f' = A_{(f)}[T_1^{\pm 1}, \dotsc,
  T_r^{\pm 1}]$, where $r = \rank D'$.

  Note the primes $P \in A_{(f)}$ correspond to the primes $P[T_1^{\pm 1},
  \dotsc, T_r^{\pm 1}] \subset A_f'$. Now consider the diagram
  \begin{equation*}
    \begin{tikzcd}
      & & A \arrow[d, hook'] \\
      A_{(f)} \arrow[r, hook] & A_f' \arrow[r, hook] & A_f
    \end{tikzcd}
  \end{equation*}
  It is easy to see that if $A_f' = A_f$, then the primes in $A_{(f)}$ would
  correspond to homogeneous primes in $A$ which do not contain $f$. The
  condition $A_f' = A_f$ holds whenever the hypothesis of the proposition is
  satisfied.
\end{proof}

\begin{corollary}
  \label{cor:hprAprAf}
  Under the hypothesis of proposition \ref{pro:prchompr}, the points in
  $D_+(f) \subset \mProj A$ correspond to all homogeneous primes in
  $A$ which do not contain $f$.
\end{corollary}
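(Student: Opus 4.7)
The plan is to extract the bijective correspondence implicit in the proof of Proposition \ref{pro:prchompr} and then verify that every homogeneous prime of $A$ not containing $f$ is hit. Once this is done, the remaining content of the corollary is essentially bookkeeping already present in the proof of the proposition.

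Starting from the chain $A_{(f)} \hookrightarrow A_f' = A_f$, the proof of Proposition \ref{pro:prchompr} already produces a map from $\Spec A_{(f)}$ to homogeneous primes of $A$ sending a prime $P \subset A_{(f)}$ first to $P[T_1^{\pm 1}, \dotsc, T_r^{\pm 1}] = P A_f$ and then to its contraction $Q \subset A$. Since $P A_f$ is a proper prime of $A_f$, the element $f$ cannot lie in $Q$, so the image is contained in the set of homogeneous primes of $A$ not containing $f$. Injectivity is immediate, since $Q$ determines $P A_f$ by extension and then $P$ by intersection with $A_{(f)}$.

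For surjectivity I would take a homogeneous prime $Q \subset A$ with $f \notin Q$ and consider its extension $Q A_f$, which is again a homogeneous prime of $A_f$. Using the hypothesis in the form $A_f = A_f' = A_{(f)}[T_1^{\pm 1}, \dotsc, T_r^{\pm 1}]$, together with the fact that the degrees of $T_1, \dotsc, T_r$ form a $\Z$-basis of $D' = D$, every homogeneous element of $A_f$ can be written uniquely as $a \cdot T_1^{a_1} \cdots T_r^{a_r}$ with $a \in A_{(f)}$. Since each $T_i$ is a unit in $A_f$, such an element lies in $Q A_f$ if and only if $a$ does. Consequently $Q A_f$ is generated by $P := Q A_f \cap A_{(f)}$, a prime of $A_{(f)}$, i.e.\ a point of $D_+(f)$; contracting back to $A$ recovers $Q$ because localization at $f$ induces a bijection between primes of $A$ not containing $f$ and primes of $A_f$.

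The main obstacle, minor but essential, is the grading argument that every homogeneous prime of $A_f$ is extended from its degree-zero part $A_{(f)}$. This is where both parts of the hypothesis are used: the equality $A_f = A_f'$ ensures the Laurent polynomial description, and the fact that the degrees of the invertible homogeneous elements span $D$ is what makes every homogeneous element absorb its $T$-monomial factor into a unit. Without either, homogeneous primes of $A_f$ could involve genuine constraints on monomials and would not be pulled back from $A_{(f)}$.
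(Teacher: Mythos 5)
Your argument is correct and follows the same route as the paper: the paper's proof of this corollary is literally a pointer back to the chain $A_{(f)} \hookrightarrow A_f' = A_f$ in the proof of Proposition \ref{pro:prchompr}, where the correspondence is asserted without detail. What you add — the check that every homogeneous prime of $A_f$ is extended from its degree-zero part because the homogeneous units $T_i$ absorb the monomial factors, plus the standard localization bijection — is exactly the verification the paper declares ``easy to see,'' so your write-up is a faithful (and more complete) version of the intended proof.
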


\begin{proof}
  This was mentioned in the proof of proposition \ref{pro:prchompr} after
  the diagram.
\end{proof}

\begin{corollary}
\label{cor:ptsarepr}
  Suppose $A$ is a $D$-graded ring generated over $A_0$ by a set
  \[
  \setl{a_1, \dotsc, a_n}
  \]
  of homogeneous elements such that any $\Z$-linearly independent subset of 
  \[
  \setl{\deg a_1, \dotsc, \deg a_n}
  \]
  having $\rank D$ elements is a basis for the abelian
  group $D$. In this case the hypothesis of proposition \ref{pro:prchompr}
  holds and hence the points in $\mProj A$ will
  correspond to homogeneous prime ideals in the graded ring $A$.
\end{corollary}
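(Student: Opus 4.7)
The plan is to verify the hypothesis of Proposition \ref{pro:prchompr} for the family
\[
F = \set{f_S = \prod_{i \in S} a_i}{S \subseteq \{1,\dotsc,n\},\ \sizeof{S} = \rank D,\ \{\deg a_i\}_{i \in S}\text{ is $\Z$-lin.\ indep.}}.
\]
For each such $S$, the hypothesis of the corollary says that $\{\deg a_i\}_{i \in S}$ is a $\Z$-basis of $D$. Since each $a_i$ with $i \in S$ divides $f_S$, it becomes a unit in $A_{f_S}$, so the degrees of homogeneous units in $A_{f_S}$ already generate $D$. In particular every $f_S \in F$ is relevant and the second hypothesis of Proposition \ref{pro:prchompr} (the subgroup of unit degrees equals $D$) is automatic.

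The real content is to show $\mProj A = \bigcup_{f_S \in F} \Spec A_{(f_S)}$. Take $p \in \mProj A$ and let $I_p$ be the associated homogeneous ideal, whose complement among homogeneous elements is multiplicatively closed by Remark \ref{rem:ptsinmhs}. Let $T = \setl{i \in \{1,\dotsc,n\} : a_i \notin I_p}$. I will show that $\{\deg a_i\}_{i \in T}$ has $\Q$-rank $\rank D$; having established this, pick any maximal $\Z$-independent subset $S \subseteq T$ of $\{\deg a_i\}_{i \in T}$, which necessarily has $\rank D$ elements and hence is a $\Z$-basis of $D$ by hypothesis, giving $f_S \in F$ with $p \in D_+(f_S)$ (since each $a_i$ for $i \in S$ lies outside $I_p$, and the complement is multiplicatively closed).

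Suppose for contradiction that $D_T := \Z\langle \deg a_i : i \in T\rangle$ has rank strictly less than $\rank D$. By definition of $\mProj A$ there is some relevant $h$ with $p \in D_+(h)$, so $h \notin I_p$. Expanding $h$ as a homogeneous polynomial in $a_1, \dotsc, a_n$ with coefficients in $A_0$, every monomial has degree $\deg h$, and since $h \notin I_p$ at least one monomial lies outside $I_p$; multiplicative closedness forces all its $a_i$-factors to be in $T$, so $\deg h \in D_T$. For any homogeneous unit $g/h^k \in \invelts{A_h}$ with inverse $g'/h^{k'}$, the identity $gg' = h^{k+k'}$ in $A_h$ yields $h^N g g' = h^{N+k+k'}$ in $A$ for some $N$, and the right side is outside $I_p$; multiplicative closedness then forces $g \notin I_p$, and the monomial argument applied to $g$ gives $\deg g \in D_T$. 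Hence the subgroup of degrees of units in $A_h$ is contained in $D_T$, contradicting the finite-index condition from relevance of $h$.

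The main obstacle is precisely this last step: ensuring that \emph{all} degrees of units in $A_h$ — not only $\deg h$ — are forced into the proper subgroup $D_T$. Clearing denominators to move the relation $g g' = h^{k+k'}$ from the localization $A_h$ back into $A$ so that Remark \ref{rem:ptsinmhs} can be applied is the key technical move; the rest of the argument is then a clean consequence of the ``any independent subset of size $\rank D$ is a basis'' assumption.
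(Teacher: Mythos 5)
Your proof is correct. The paper itself supplies no argument for this corollary --- it is stated as an immediate consequence of Proposition \ref{pro:prchompr} --- so there is no official proof to compare against; what you have written is exactly the verification the authors leave implicit. Your choice of $F$ (products over $\Z$-independent subsets of size $\rank D$) is the natural one, and the unit-degree condition for each $f_S$ is indeed automatic since each $a_i$ with $i\in S$ becomes a homogeneous unit in $A_{f_S}$. You also correctly isolate the only non-trivial point, namely the covering condition, and your contradiction argument --- clearing denominators in $gg'=h^{k+k'}$ to land back in $A$, then using Remark \ref{rem:ptsinmhs} to force $g\notin I_p$ and hence $\deg g\in D_T$ --- closes it properly; this is essentially a strengthening of the paper's later Lemma \ref{lem:dplmonlX}. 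One small attribution slip: the step ``the monomial $\prod a_i^{e_i}\notin I_p$ forces each $a_i$-factor into $T$'' follows from $I_p$ being an \emph{ideal} (if some factor lay in $I_p$, the monomial would too), not from multiplicative closedness of the complement, which gives the reverse implication; the fact you need is available, so this is cosmetic. Likewise the passage from ``$h\notin I_p$'' to ``some monomial of $h$ of degree $\deg h$ lies outside $I_p$'' uses that $I_p$ is closed under addition and absorbs $A_0$-multiples --- worth stating explicitly, but again harmless.
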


\begin{remark}
  \label{rem:prjempty}
  The way $\mProj A$ is defined for a $D$-graded ring $A$, it can happen
  that $A$ has no relevant element and then $\mProj A = \emptyset$. If
  $A$ is a finitely generated algebra over $A_0$, one sufficient condition
  for the existence of relevant elements is that there exists a collection
  of homogeneous generators $\set{x_i}{1 \leq i \leq r}$ such that
  $\set{\deg x_i}{1 \leq i \leq r}$ generates a finite index subgroup in
  $D$. This condition is easy to check, for example, when $A$ is the
  polynomial ring over $\C$.
\end{remark}

\begin{remark}
  \label{rem:frelgmqt}
  By \cite[Lemma 2.1]{breshr:ampfamilies},  the map $\Spec A_f \longrightarrow \Spec A_{(f)}$, which is induced by the inclusion $A_{(f)}
  \hookrightarrow A_f$, is a geometric quotient.
\end{remark}

By definition, the collection of affine open subschemes
\[
\set{D_+(f)}{f \in A \text{ is homogeneous and relevant}}
\]
covers $\mProj A$. We state the following easy fact for subsequent use.

\begin{lemma}
  \label{lem:dpfdpgfg}
  With the notation as above, $D_+(f) \cap D_+(g) = D_+(fg) \subset \mProj
  A$.
\end{lemma}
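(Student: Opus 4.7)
The lemma asserts that the distinguished affine opens in $\mProj A$ intersect as in the usual Proj construction. My plan has two main steps.

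First, I would verify that $fg$ is itself relevant, so that $D_+(fg)$ is defined in the first place. Since $f$ and $g$ are homogeneous, $fg$ is homogeneous of degree $\deg f + \deg g$. Relevance of $f$ means that $A_f$ is periodic, i.e.\ the subgroup $D'_f \subseteq D$ of degrees of homogeneous units in $A_f$ has finite index. Because the canonical localization map $A_f \hookrightarrow A_{fg}$ sends homogeneous units to homogeneous units of the same degree, the analogous subgroup $D'_{fg}$ contains $D'_f$ and is therefore also of finite index. Hence $A_{fg}$ is periodic and $fg$ is relevant.

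For the set-theoretic equality I would then invoke remark \ref{rem:ptsinmhs}: each point $p \in \mProj A$ corresponds to a homogeneous ideal $\mathfrak{p}$ whose homogeneous complement is multiplicatively closed, and $p \in D_+(h)$ iff the relevant element $h$ does not lie in $\mathfrak{p}$. For homogeneous $f$ and $g$, the equivalence
\[
fg \notin \mathfrak{p} \iff f \notin \mathfrak{p} \text{ and } g \notin \mathfrak{p}
\]
holds: one direction uses that $\mathfrak{p}$ is an ideal (contrapositively), the other uses multiplicative closure of the homogeneous complement. This translates immediately into the desired identity $D_+(f) \cap D_+(g) = D_+(fg)$ of subsets of $\mProj A$.

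The only mild obstacle is the first step, and even that is routine once one unwinds the definition of periodic. As a sanity check I would verify the statement directly at the affine level by noting that $\Spec A_{fg} = \Spec A_f \cap \Spec A_g$ as open subschemes of $\Spec A$, combined with the $G$-saturation of these loci for $G = \Spec A_0[D]$ and the geometric quotient statement of remark \ref{rem:frelgmqt}, yielding the same conclusion in the ringed-space picture used in definition \ref{def:muhospcs}.
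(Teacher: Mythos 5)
Your argument is correct, and its main line is genuinely different from the one in the paper. The paper's proof works entirely on the quotient side: it observes that $\Spec A_{fg} = \Spec A_f \cap \Spec A_g$ inside $\Spec A$ and then uses the fact that $\Spec A_{(f)}$, $\Spec A_{(g)}$, $\Spec A_{(fg)}$ are geometric quotients (remark \ref{rem:frelgmqt}) to descend this identity to $\mProj A$ --- which is exactly your closing ``sanity check.'' Your primary route instead works point by point via remark \ref{rem:ptsinmhs}, identifying points with homogeneous ideals whose homogeneous complement is multiplicatively closed and checking $fg \notin \mathfrak{p} \iff f \notin \mathfrak{p} \text{ and } g \notin \mathfrak{p}$; this is the same mechanism the paper itself uses later in the proof of lemma \ref{lem:dplmonlX}, so it is fair game, though it does lean on the (cited, not reproved) fact that the ideal attached to a point and the criterion ``$p \in D_+(h)$ iff $h \notin I_p$'' are consistent across the charts $D_+(f)$ --- the compatibility that the geometric-quotient argument handles more structurally. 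A genuine improvement in your write-up is the first step: you verify that $fg$ is relevant (via $A_f \hookrightarrow A_{fg}$ carrying homogeneous units to homogeneous units, so the degree subgroup for $A_{fg}$ contains the finite-index one for $A_f$), which is needed for $D_+(fg)$ to be defined at all and which the paper's proof silently assumes.
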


\begin{proof}
  This is implicit in \cite[propostion 3.1]{breshr:ampfamilies}. Note that
  for relevant elements $f$ and $g$ in $A$, $\Spec A_{fg} = \Spec A_f \cap
  \Spec A_g$ as subschemes of $\Spec A$. Now $\Spec A_{(fg)}$, $\Spec
  A_{(f)}$ and $\Spec A_{(g)}$ are geometric quotients (see remark
  \ref{rem:frelgmqt}) under the action of $\Spec A_0[D]$ and hence $\Spec
  A_{(fg)} = \Spec A_{(f)} \cap \Spec A_{(g)}$ considered as a subscheme of
  $\mProj A$.
\end{proof}

For later, we record two results of Brenner and Schr{\"o}er regarding
finiteness.

\begin{lemma}[\cite{breshr:ampfamilies}, lemma 2.4]
  \label{lem:noethcnd}
  For a finitely generated abelian group $D$ and a $D$-graded ring
  $A$, the following are equivalent:
  \begin{enumerate}[label=(\textit{\roman*})]
    \item The ring $A$ is noetherian.
    \item $A_0$ is noetherian and $A$ is an $A_0$-algebra of finite type.
  \end{enumerate}
\end{lemma}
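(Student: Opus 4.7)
The plan is to prove the two implications separately. The direction (ii) $\Rightarrow$ (i) is a routine application of Hilbert's basis theorem: if $A$ is finitely generated as an $A_0$-algebra by $a_1, \dotsc, a_n$, then $A$ is a quotient of the polynomial ring $A_0[x_1, \dotsc, x_n]$, which is noetherian by iterated application of Hilbert's basis theorem over the noetherian base $A_0$; hence so is $A$. The substantive content lies in the converse.

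For (i) $\Rightarrow$ (ii), I would first show that $A_0$ is noetherian using the $A_0$-linear retraction $\pi \colon A \twoheadrightarrow A_0$ onto the degree-zero graded piece. For any ideal $J \subset A_0$, the extension $JA = \bigoplus_{d \in D} J \cdot A_d$ is a homogeneous ideal of $A$ with $\pi(JA) = JA \cap A_0 = J$. Thus any ascending chain $J_1 \subset J_2 \subset \cdots$ in $A_0$ lifts to an ascending chain $J_1 A \subset J_2 A \subset \cdots$ in $A$, which must stabilize by the noetherian hypothesis on $A$; applying $\pi$ shows the original chain in $A_0$ stabilizes.

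The core of the argument is to show that $A$ is finitely generated as an $A_0$-algebra, and my strategy is to reduce to the $\N$-graded case where the proof is standard. In the $\N$-graded case, the irrelevant ideal $A_+ = \bigoplus_{d > 0} A_d$ admits a finite set of homogeneous generators $f_1, \dotsc, f_r$ (since $A$ is noetherian), and then $A = A_0[f_1, \dotsc, f_r]$ follows by induction on degree: every homogeneous $a \in A_d$ with $d > 0$ expands as $a = \sum c_i f_i$ with homogeneous $c_i$ of strictly smaller non-negative degree. For general finitely generated abelian $D$, one writes $D \cong \Z^n \oplus T$ with $T$ finite to reduce to $D = \Z^n$, and then inducts on $n$ by regarding $A$ as a $\Z$-graded algebra over its $\Z^{n-1}$-graded subring fixed under the last $\Gm$-factor (which is itself noetherian by the same retraction argument).

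The main obstacle is the $\Z$-graded base case, because the subset $\bigoplus_{d > 0} A_d$ is not an ideal of $A$ (products with negative-degree elements land in arbitrary degrees), so the naive degree induction fails: coefficients in an expansion $a = \sum c_i f_i$ may have degrees larger in absolute value than $\deg a$. To resolve this, I would instead apply the noetherian hypothesis to the two homogeneous ideals $A \cdot \bigl(\bigoplus_{d > 0} A_d\bigr)$ and $A \cdot \bigl(\bigoplus_{d < 0} A_d\bigr)$ separately, extracting finite sets of homogeneous generators $h_1, \dotsc, h_p$ (of positive degree) and $k_1, \dotsc, k_q$ (of negative degree), and then argue that together with $A_0$ these generate $A$ as an $A_0$-algebra, either via a combined induction that alternates between positive and negative expansions or by noetherian descent on a carefully chosen filtration of $A$ by sub-$A_0$-algebras generated by finite subsets of $\{h_i, k_j\}$.
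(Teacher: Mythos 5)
The paper itself offers no proof of this lemma; it is quoted verbatim from Brenner and Schr{\"o}er, so your attempt can only be measured against the standard argument. Your treatment of (ii) $\Rightarrow$ (i), of the noetherianness of $A_0$ via the retraction onto the degree-zero piece, and your reduction of the general case to $D = \Z$ (splitting off torsion and inducting on the rank) are all correct, and you have put your finger on exactly the right difficulty: for a $\Z$-grading the positive part is not an ideal and the naive degree induction oscillates. But the resolution you propose does not close this gap. The claim you are aiming for --- that $A_0$ together with homogeneous generators $h_1, \dotsc, h_p$ of the ideal $I_+ = A \cdot \bigl( \bigoplus_{d > 0} A_d \bigr)$ and $k_1, \dotsc, k_q$ of $I_- = A \cdot \bigl( \bigoplus_{d < 0} A_d \bigr)$ generates $A$ as an $A_0$-algebra --- is false. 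Take $A = k[f,g,v]/\bigl(v(1 - fg)\bigr)$ with $\deg f = 2$, $\deg g = -2$, $\deg v = 1$; the relation is homogeneous of degree $1$, $A$ is noetherian, and $v = fgv$ forces $I_+ = (f)$ and $I_- = (g)$, yet $A_1 \ni v \neq 0$ while the $A_0$-subalgebra generated by $f$ and $g$ lives entirely in even degrees. The ``alternating expansion'' you suggest is precisely what cycles here: $A_1 = fA_{-1}$ and $A_{-1} = gA_1$ yield $A_1 = (fg)A_1$ and the process never terminates; and since the target statement is false, no filtration by subalgebras generated by subsets of $\{h_i, k_j\}$ can exhaust $A$ either.

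Two further observations repair the argument. First, for every $d \in D$ the piece $A_d$ is a finitely generated $A_0$-module: the ideal $A \cdot A_d$ admits finitely many generators $a_1, \dotsc, a_k \in A_d$, and extracting the degree-$d$ component of an identity $a = \sum c_i a_i$ leaves $a = \sum (c_i)_0\, a_i$, whence $A_d = \sum A_0 a_i$. Second, set $E = \max_i \deg h_i$ and $F = \min_j \deg k_j$; for $d > E$ the degree-$d$ component of the inclusion $A_d \subset I_+$ gives $A_d = \sum_i h_i A_{d - \deg h_i}$ with every shifted degree strictly positive and strictly smaller than $d$, so induction shows that $\bigoplus_{d > 0} A_d$ lies in the $A_0$-algebra generated by $\bigcup_{0 < d \leq E} A_d$; the mirror statement handles negative degrees. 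Hence $A$ is generated over $A_0$ by the finitely many $A_0$-module generators of the $A_d$ with $F \leq d \leq E$. With these two points inserted, your outline becomes a complete proof of the $\Z$-graded case, and the rest of your reduction goes through.
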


\begin{proposition}[\cite{breshr:ampfamilies}, proposition 2.5]
  \label{pro:unclfntp}
  Suppose $A$ is a noetherian ring graded by a finitely generated abelian
  group $D$. Then the morphism $\varphi \colon \mProj A \longrightarrow
  \Spec A_0$ is universally closed and of finite type.
\end{proposition}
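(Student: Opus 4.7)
The plan is to verify the hypothesis of Proposition \ref{pro:prchompr} by choosing as $F$ the collection of products $f_J := \prod_{j \in J} a_j$, indexed by subsets $J \subseteq \{1,\dotsc,n\}$ of size $r := \rank D$ for which $\{\deg a_j\}_{j \in J}$ is $\Z$-linearly independent. By the standing assumption of the corollary, each such collection of degrees is actually a $\Z$-basis of $D$, so in $A_{f_J}$ the elements $a_j$ with $j \in J$ are homogeneous units whose degrees already exhaust $D$. This simultaneously certifies that each $f_J$ is relevant (the degree subgroup of homogeneous units is all of $D$, trivially of finite index) and that the second condition in Proposition \ref{pro:prchompr} is met.

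The substantive task is then to show that the $D_+(f_J)$ cover $\mProj A$. By Remark \ref{rem:ptsinmhs}, a point $p \in \mProj A$ corresponds to a homogeneous ideal $\mathfrak{p}$ whose complement among homogeneous elements is multiplicatively closed, and $p$ lies in $D_+(f)$ for some relevant $f \notin \mathfrak{p}$. Setting $I := \{i : a_i \in \mathfrak{p}\}$, it suffices to produce a size-$r$ subset $J \subseteq \{1,\dotsc,n\} \setminus I$ with $\Z$-linearly independent degrees, since multiplicative closedness of the complement then gives $f_J \notin \mathfrak{p}$ and hence $p \in D_+(f_J)$.

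The key and hardest step is extracting such a $J$ from the relevance of $f$. Any homogeneous element $g \notin \mathfrak{p}$ can be expanded as $g = \sum_\alpha c_\alpha a^\alpha$ with $c_\alpha \in A_0$ and all monomials of the same degree; since $\mathfrak{p}$ is an ideal, at least one term $c_\alpha a^\alpha$ lies outside $\mathfrak{p}$, so $a^\alpha \notin \mathfrak{p}$, and multiplicative closedness of the complement then forces every $a_i$ appearing in that monomial (i.e.\ with $\alpha_i > 0$) to lie outside $\mathfrak{p}$, i.e.\ in $\{1,\dotsc,n\} \setminus I$. In particular $\deg g$ lies in the subgroup $D_I := \langle \deg a_i : i \notin I\rangle_\Z$. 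Applying this both to $f$ itself and to the numerator $h$ of any homogeneous unit $h/f^a \in \invelts{A_f}$, I get that every degree of a homogeneous unit in $A_f$ lies in $D_I$. Periodicity of $A_f$ then forces $D_I$ to be of finite index in $D$, hence to span $D_\Q$, so $\{\deg a_i : i \notin I\}$ contains $r$ linearly independent elements. By the hypothesis of the corollary these form a $\Z$-basis of $D$, yielding the required $J$.
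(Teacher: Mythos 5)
Your proposal does not address the statement at hand. Proposition \ref{pro:unclfntp} asserts that the structure morphism $\varphi \colon \mProj A \longrightarrow \Spec A_0$ is universally closed and of finite type for an arbitrary noetherian $D$-graded ring $A$. What you have written is instead an argument for Corollary \ref{cor:ptsarepr} --- that under the basis condition on $\setl{\deg a_1, \dotsc, \deg a_n}$ the points of $\mProj A$ correspond to homogeneous primes --- built on verifying the hypotheses of Proposition \ref{pro:prchompr}. Nothing in your text engages with closedness of $\varphi$, its stability under base change, or the finite-type property; moreover, Proposition \ref{pro:unclfntp} carries no hypothesis about generators or their degrees, so the ``standing assumption of the corollary'' that your whole argument leans on is simply not available here.

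For the record, the paper offers no internal proof to compare against: this proposition is quoted verbatim from Brenner and Schr{\"o}er \cite{breshr:ampfamilies}. A proof would have to run along entirely different lines from yours: the finite-type claim comes from Lemma \ref{lem:noethcnd} (noetherianity forces $A$ to be of finite type over $A_0$, whence each $A_{(f)}$ is of finite type over $A_0$ and finitely many $D_+(f)$ cover $\mProj A$), while universal closedness is the multigraded analogue of properness of $\Proj$ and requires a valuative-criterion or elimination-theoretic argument, not a statement about which ideals are prime. Your argument, read as a proof of Corollary \ref{cor:ptsarepr}, looks essentially reasonable --- in particular the reduction to monomials via multiplicative closedness of the complement of $\mathfrak{p}$ is the right idea --- but it proves a different statement.
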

 \begin{definition} [\cite{breshr:ampfamilies}, page 10]\label{def:simtorembed}
 Let $R$ be a ring, $M$ be a free abelian group of finite rank, and $N:= \Hom(M,\Z)$ be dual of $M$. Let $X$ be an $R-$scheme and $T:=\Spec R[M]$ be the torus. A simplicial torus embedding of torus $T$ is $T-$equivariant open map $T \hookrightarrow X$ locally given by semigroup algebra homomorphisms $R[\dual{\sigma} \cap M] \rightarrow R[M]$, where $\sigma$ is a strongly convex, simplicial cone in $N$.
 \end{definition}
 
 \begin{remark}
   If $X$ is a toric variety with torus $T$, then $X$ is a simplicial torus embedding of the torus $T$. There are other schemes which are simplicial torus embeddings of some torus. Homogeneous spectrum of miltigraded polynomial algebras are examples of this type.
 \end{remark}
 
 Let $D$ be an abelian group of finite type and $A = k[x_1, \dots, x_n]$ be a $D-$graded polynomial $k-$algebra. Suppose the grading is given by a linear map $P: \Z^{n} \rightarrow D$ with finite co-kernel. Then we have the following sequence of abelian groups
 \begin{equation*}
     0 \rightarrow M \rightarrow Z^{n} \rightarrow D,
 \end{equation*}
  where $M$ is the kernel of $P$.
  
  \begin{proposition}[\cite{breshr:ampfamilies}, proposition 3.4]
  Assume the above setting. Then $\mProj A$ is a simplicial torus embedding of the torus $\Spec k[M]$.
  \end{proposition}
  
  \begin{remark}[\cite{breshr:ampfamilies}, remark 3.7]\label{rem:relconecorr}
    Again we assume the above setting. Let $I = \{1, \dots, n\}$ be an index set and $N:= \Hom(M,\Z)$ be the dual of $M$. Let $\text{pr}_i: \Z^{n} \rightarrow \Z, i \in I$ be projections. We associate each subset $J \subset I$ to cone $\sigma_J \subset \NReal$ generated by $\text{pr}_i|_{M}, i \in J$. Then we have a correspondence between subsets $J$ of $I$ with $\prod_{i \in J}x_i$ relevant, and strongly convex, simplicial cones $\sigma_{I\setminus J} \subset \NReal$.
  \end{remark}

\begin{proposition}\label{pro:normal}
Suppose $A$ is a noetherian normal ring satisfying above hypothesis. Then $\mProj A$ is a normal scheme.
\end{proposition}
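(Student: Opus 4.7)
The plan is to verify normality locally on the standard affine cover of $\mProj A$ and then transfer normality from the localization $A_f$ to its degree-zero subring $A_{(f)}$ by exploiting the $D$-grading.

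Since normality is a local property, and $\mProj A$ is covered by the affines $D_+(f) = \Spec A_{(f)}$ with $f$ ranging over homogeneous relevant elements of $A$, it suffices to show that each $A_{(f)}$ is normal. Localization preserves both the noetherian condition and normality, so $A_f$ is noetherian and normal. The problem thus reduces to the following general claim: if $R$ is a noetherian normal $D$-graded ring, then its degree-zero subring $R_0$ is normal.

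To prove the claim, let $y \in \operatorname{Frac}(R_0)$ be integral over $R_0$. The inclusion $R_0 \hookrightarrow R$ induces $\operatorname{Frac}(R_0) \hookrightarrow \operatorname{Frac}(R)$, so $y$ is integral over $R$ as well, and normality of $R$ gives $y \in R$. Decompose $y = \sum_{d \in D} y_d$ into its homogeneous components and write $y = a/b$ with $a, b \in R_0$. Then $by = a$, and comparing degrees yields $by_0 = a$ together with $b y_d = 0$ for every $d \neq 0$. Since $R$ is a domain and $b \neq 0$, each $y_d$ with $d \neq 0$ vanishes, so $y = y_0 \in R_0$. Applied to $R = A_f$, which inherits the $D$-grading from $A$, this gives normality of $A_{(f)}$, as required.

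The main subtlety is the implicit domain hypothesis used in writing $\operatorname{Frac}(A_f)$ and cancelling by $b$. If ``normal'' is understood to include being a domain, the argument is complete as stated. Otherwise, one reduces to the domain case either by decomposing $\Spec A$ into its (disjoint, in the normal case) irreducible components and treating each separately, or by arguing stalk-wise via the characterization that $A$ is normal exactly when every $A_{\mathfrak{p}}$ is a normal domain. In either variant the structural point, that the degree-zero part of a graded normal ring inherits normality, does the work; the rest is bookkeeping.
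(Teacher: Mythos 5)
Your proof is correct and follows essentially the same route as the paper's: reduce to the charts $D_+(f)=\Spec A_{(f)}$, observe that an element of $\operatorname{Frac}(A_{(f)})$ integral over $A_{(f)}$ is integral over the normal ring $A_f$ and hence lies in $A_f$, and then use the grading to conclude it lies in the degree-zero part $A_{(f)}$. The only cosmetic difference is in that last step --- the paper embeds $\operatorname{Frac}(A_{(f)})$ into the degree-zero component of the homogeneous localization $H^{-1}(A_f)$, whereas you clear denominators and compare homogeneous components directly; both arguments rely on $A$ being a domain, a point you rightly flag and which the paper leaves implicit in its use of function fields.
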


\begin{proof}
  It is enough to check normality over an affine open cover $\{D_{+}(f_i)\}$ of
  $\mProj A$. Let $f$ be a relevant element of $A$, $H$ be the set of
  nonzero homogeneous elements in $A_f$, $k(A_f)$ and $ k(A_{(f)})$ be
  function fields of $A_f$ and $A_{(f)}$ respectively. Then $H^{-1}(A_f)$ is
  a graded ring and $A_f \hookrightarrow H^{-1}(A_f)$ is a graded
  monomorphism.  Furthermore, we have the inclusion of the $0$-th component
  of the grading: $k(A_{(f)}) \subset \big(H^{-1}(A_f)\big)_{0}$. 
   \begin{equation*}
    \begin{tikzcd}
      & k(A_f) \\
      A_f \arrow[r, hook] \arrow[ur, hook] &  (A_f)_{H} \arrow[u, hook] \\
      A_{(f)} \arrow[r, hook] \arrow[u]  & k(A_{(f)})\arrow[u] 
    \end{tikzcd}
  \end{equation*}
  Since $A$ is normal, we have $A_f$ is normal in $k(A_f)$ and therefore
  integrally closed in $(A_f)_{H}$. We want to show that $A_{(f)}$ is
  normal. Let $a \in k(A_{(f)})$ be integral over $A_{(f)}$. Then $a$ is
  integral over $A_f$ and hence $a \in (A_f)_{0} = A_{(f)}$ since $a \in
  \big(H^{-1}(A_f)\big)_{0}$.
\end{proof}

\section{Affine \tvarieties}
\label{sec:afftvars}

In this section we shall recall Altmann and Hausen's theory of affine
$T$-varieties. The results from this paper are summarized here for ready
reference. In this section (cf.\ \cite[section 2]{ah:affinetvar}),
\emph{
  all algebraic varieties are integral schemes of finite type over an
  algebraic closed field $k$ of characteristic zero.
}


\begin{definition}
  \label{def:tvariety}
  (See, \cite[section 1.1]{aipsv:geomtvar}) Suppose $M$ is a free
  $\Z$-module of finite rank and $T = \Spec k[M]$ be the corresponding
  torus. An \emph{affine $T$-variety} is a normal affine variety with an
  effective action of $T$.
\end{definition}

The $T$-varieties have a partial combinatorial description which we review
below.


\begin{definition}
  \label{def:ppdivonY}
  Suppose $Y$ is a semiprojective variety; i.e.\ an algebraic variety such
  that the $k$-algebra  $\glreg{Y}$ is finitely generated and $Y$ is
  projective over $Y_0 = \Spec \glreg{Y}$. Let $N$ be a finite rank free
  $\Z$-module and $M = \Hom_{\Z}(N, \Z)$. Then $T = \Spec k[M]$ is a split
  torus over $k$. Fix a pointed (i.e.\ a strongly convex polyhedral) cone
  $\sigma \in \NQ = N \otimes_{\Z} \Q$. Let $\Polpq$ be the collection of
  convex polyhedra $\Delta$ (i.e.\ finite intersection of closed half
  spaces) such that
  \begin{align*}
    \tail(\Delta) &= \set{v \in \NQ}{v' + tv \in \Delta\ \forall v' \in
    \Delta, t \in \Qpos} \\
    &= \sigma,
  \end{align*}
  and let the \emph{group of $\sigma$-polyhedra}, $\Polq$, be the
  Grothendieck group of $\Polpq$ (see \cite[definition 1.2]{ah:affinetvar}).
  The \emph{group of rational polyhedral Weil} (respectively,
  \emph{Cartier}) \emph{divisors} with respect to $\sigma$ is defined as
  $\WDiv_{\Q}(Y, \sigma) := \Polq \otimes_{\Z} \WDiv(Y)$ (respectively,
  $\CDiv_{\Q}(Y, \sigma) := \Polq \otimes_{\Z} \CDiv(Y)$). To describe the
  integral polyhedral divisors, one considers those polyhedra which admit a
  decomposition as a Minkowski sum of a polytope with vertices in $N$ and
  $\sigma$ (see \cite[definitions 1.1, 1.2 and 2.3]{ah:affinetvar}). For an
  element $u \in \dualsigma$, one can define a linear \emph{evaluation}
  functional $\eval_u \colon \Polq \longrightarrow \Q$ such that for any
  $\Delta \in \Polpq$, $\eval_u(\Delta) = \min_{v \in \Delta} \inpr{u}{v}$.
  A Weil (respectively, Cartier) polyhedral divisor is an element of $\Polq
  \otimes_{\Z} \WDiv(Y)$ (respectively, $\Polq \otimes_{\Z} \WDiv(Y)$).
  Given a polyhedral divisor $\plD = \sum_D \Delta_D \otimes D$, and an $u
  \in \dualsigma$, one defines $\plD(u) = \sum_D \eval_u(\Delta_D) D$.  By a
  \emph{pp-divisor} (or a \emph{proper, polyhedral divisor}) one means a
  polyhedral divisor $\plD \in \CDiv_{\Q}(Y, \sigma)$ such that it can be
  represented as $\plD = \sum_{i=1}^{r} \Delta_i \otimes D_i$ with $\Delta_i
  \in \Polpq$ and effective divisors $D_i$ satisfying the following: for any
  $u \in \relint \dualsigma$, $\plD(u)$ is a big divisor on $Y$; and for any
  $u \in \dualsigma$, $\plD(u)$ is semiample (see \cite[definition
  2.7]{ah:affinetvar}). The semigroup of all pp-divisors having tail cone
  $\sigma$ is denoted by $\ppdivQ(Y, \sigma)$.
\end{definition}

\begin{definition}[Weight cone]
  \label{def:wghtcone}
  (See \cite[introductory discussion, section 3]{ah:affinetvar},
  \cite[section 2]{bh:gittoric}.)
  Given an affine variety $X = \Spec A$ with an effective action of the
  torus $T = \Spec k[M]$, suppose that the decomposition of $A$ into $\chi^m
  \colon T \to \Gm(k)$ semi-invariants is given by $A = \bigoplus_{m \in M}
  A_m$. Then the \emph{weight cone} is the convex polyhedral cone $\omega
  \subset \MQ$ generated by the \emph{weight monoid} $S = \set{m \in M}{A_m
  \neq \setl{0}}$.
\end{definition}

Altmann and Hausen prove the following theorem.

\begin{theorem}[AH08, Theorem 3.1 and 3.4]
  \label{the:affTplDY}
  Given a normal, semiprojective variety $Y$, a lattice $N$, the dual
  lattice $M$, a pointed cone $\sigma \subset \NQ$, a pp-divisor $\plD \in
  \ppdivQ(Y, \sigma)$, the affine scheme associated to $(Y, \plD)$ is
  described as 
  \begin{equation*}
    X = \Spec \glsec{Y}{\bigoplus_{u \in \dualsigma \cap M}
    \strshf{Y}(\plD(u))}.
  \end{equation*}
  Then $X$ is a normal $T$-variety where $T = \Spec k[M]$. Moreover given
  any normal affine $T$-variety $X = \Spec A$ with weight cone $\omega \in
  \MQ$, there is exists a normal semiprojective variety $Y$ and a pp-divisor
  $\plD \in \ppdivQ(Y, \dual{\omega})$ such that the $T$-variety associated
  to $(Y, \plD)$ is $X$.
\end{theorem}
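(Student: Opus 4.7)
The theorem has two halves, which I would treat separately.

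\emph{Forward direction} $(Y, \plD) \mapsto X$. The plan is to form the $M$-graded $k$-algebra
\[
  R := \bigoplus_{u \in \dualsigma \cap M} \glsec{Y}{\strshf{Y}(\plD(u))},
\]
with multiplication coming from the inclusion $\strshf{Y}(\plD(u)) \otimes \strshf{Y}(\plD(u')) \hookrightarrow \strshf{Y}(\plD(u+u'))$. This inclusion exists because each coefficient $\eval_u(\Delta_D) = \min_{v \in \Delta_D} \inpr{u}{v}$ is concave in $u$, so $\plD(u) + \plD(u') \leq \plD(u+u')$ as Weil $\Q$-divisors. I would then verify, in order: (i) $R$ is an integral domain, since it embeds into $k(Y)[M]$; (ii) $R$ is normal, via the graded normality criterion combined with integral closedness of each homogeneous summand inside $k(Y)$; (iii) $R$ is a finitely generated $k$-algebra, combining semiprojectivity of $Y$, semiampleness of $\plD(u)$ on all of $\dualsigma$, bigness on $\relint \dualsigma$, and a standard finite-generation argument for multisection rings; (iv) the induced $T$-action on $X := \Spec R$ is effective, which reduces to $\dualsigma \cap M$ generating $M$ as a group and follows from pointedness of $\sigma$.

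\emph{Reverse direction} $X = \Spec A \mapsto (Y, \plD)$. Given the weight decomposition $A = \bigoplus_m A_m$ and weight cone $\omega \subset \MQ$, set $\sigma := \dual\omega$. The natural candidates for $Y$ are the GIT quotients $Y_u$ of the $u$-semistable loci for $u \in \relint\omega \cap M$; these vary with $u$ through a wall-and-chamber decomposition of $\omega$ and are connected by birational modifications. I would take $Y$ to be a common normal semiprojective model dominating all the $Y_u$. Given such a $Y$, for each prime divisor $D \subset Y$ set
\[
  \nu_D(u) := \min_{0 \neq f \in A_u} \operatorname{ord}_D(f|_Y),
\]
and let $\Delta_D$ be the polyhedron in $\NQ$ whose support function (relative to $\dualsigma$) is $u \mapsto -\nu_D(u)$. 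Concavity of $\nu_D$ ensures $\Delta_D \in \Polpq$, and properness of the resulting $\plD := \sum_D \Delta_D \otimes D$ (bigness on $\relint \dualsigma$, semiampleness on $\dualsigma$) is exactly what is needed to recover $A$ as the section algebra of $\plD$.

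\emph{Main obstacle.} The delicate point is producing $Y$: the GIT quotients $Y_u$ depend on $u$ through a chamber decomposition of $\omega$, and one must show that a single normal semiprojective variety dominating all of them exists. This is the technical heart of Altmann--Hausen, constructing $Y$ essentially as an inverse limit over GIT chambers together with a representability argument. Once $Y$ is fixed, checking that the two constructions are mutually inverse reduces to a bookkeeping calculation at the level of graded pieces, verifying $A_u = \glsec{Y}{\strshf{Y}(\plD(u))}$.
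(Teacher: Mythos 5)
This statement is quoted from Altmann--Hausen (AH08, Theorems 3.1 and 3.4) and the paper offers no proof of its own, so there is nothing internal to compare against; your sketch should be judged against the cited source. On that count it is a faithful reconstruction: the forward direction via the superadditivity $\plD(u)+\plD(u')\leq\plD(u+u')$ and the checks of integrality, normality, finite generation and effectiveness, and the reverse direction via a normal semiprojective model dominating the GIT quotients $Y_u$ (which is exactly what the paper itself recalls in diagram \eqref{equ:constrcY}, as the normalization of the closure of the image of $X'=\varprojlim X_\lambda$) are precisely the AH08 strategy, and you correctly identify the construction of $Y$ as the technical heart. One minor imprecision: in your formula for $\nu_D(u)$ the expression $\operatorname{ord}_D(f|_Y)$ is not yet meaningful, since $f\in A_u$ is a function on $X$ rather than a rational section of anything on $Y$; one must first fix, for each $u$ in the weight group, a reference element $f_u\in A_u$ (or a splitting of the weight group) so that $f/f_u$ becomes a $T$-invariant rational function descending to $Y$, after which the divisors $\plD(u)$ are determined only up to the corresponding linear-equivalence shift. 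This is a routine normalization, not a gap in the argument.
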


\begin{remark}
\label{rem:nonuniqueY}
  Note that the variety $Y$ in theorem \ref{the:affTplDY} is not uniquely determined, but is 
  unique up to a birational class. For more details, see \cite[corollary 8.12]{ah:affinetvar}.
\end{remark}

\begin{remark}
  \label{rem:tilXandX}
  Suppose $Y$ and $\plD$ are as in the first part of the theorem
  \ref{the:affTplDY}. Let $\cA = \bigoplus_{m \in \dualsigma} \cA_m$ where
  $\cA_m = \strshf{Y}(\plD(m))$. Then, one can also consider the relative
  spectrum $\tilde{X} = \Spec_Y \cA$. Then $A = \Gamma(Y, \cA)$.
  The scheme $\tilde{X}$ is a normal affine variety with an effective
  $T$ action such that $\pi \colon \tilde{X} \longrightarrow Y$ is a good
  quotient. Furthermore there is a contraction morphism $r \colon \tilde{X}
  \longrightarrow X$ which is proper, birational and $T$-equivariant.
\end{remark}

The orbits are described using orbit cones which in turn define a GIT fan. We
shall require this concept later on and hence we recall the associated definitions briefly.

\begin{definition}
  \label{def:ormndetc}
  (See \cite[definition 5.1]{ah:affinetvar}, \cite[definition
  2.1]{bh:gittoric}.)
  Let $X = \Spec A$ be a normal affine variety with an action of a torus $T
  = \Spec k[M]$. Suppose the action of the torus determines the
  decomposition $A = \bigoplus_{m \in M} A_m$ into spaces of semi-invariants.
  For $x \in X$, the \emph{orbit monoid} is the submonoid $S(x) \subset M$
  defined as
  \begin{equation*}
    S(x) = \set{m \in M}{\exists f \in A_m \text{ such that } f(x) \neq 0}.
  \end{equation*}
  The orbit monoid generates a convex cone $\omega(x) \subset \MQ$ called
  the \emph{orbit cone}.

  The set of $\chi^m$ semistable points is defined as
  \begin{equation*}
    \smstbl{X}{m} = \set{x \in X}{m \in \omega(x)}
  \end{equation*}
  The \emph{GIT cone} associated to $m \in \omega \cap M$ is the
  intersection $\lambda(m) := \bigcap_{x \in X; m \in \omega(x)}
  \omega(x)$. Suppose $\omega$ is the weight cone for the torus action on $X$.
  The collection of GIT cones $\Lambda =
  \set{\lambda(m)}{m \in \omega \cap M}$ forms a quasi-fan in $\MQ$ having
  $\omega$ as its support. For brevity, we shall call this quasifan as a
  \emph{GIT fan}.
\end{definition}


Given a normal variety $X = \Spec A = \Spec \bigoplus_{m \in M} A_m$ with an
effective torus action, theorem \ref{the:affTplDY} above ensures the
existence of $(Y, \plD)$. We recall the description of $Y$, as it will be
useful in section \ref{sec:relnmhst}. According to the theory in \cite[section
2]{bh:gittoric}, $\smstbl{X}{m} = \smstbl{X}{m'}$ for $m, m'$ belonging to
the relative interior of a GIT cone $\lambda$ of the GIT fan $\Lambda$. Let
$X_{\lambda} := \smstbl{X}{m}$ for some $m \in \relint \lambda$. Then one
also has that $Y_m = \gitquot{\smstbl{X}{m}}{T} = \Proj \bigoplus_{r \in \Z}
A_{rm}$. Thus, $Y_m$'s also depend only on the fan $\lambda$ such that
$m \in \relint \lambda$, and hence are denoted by $Y_{\lambda}$. If
$\lambda' \faceof \lambda$, then one has a birational morphism
$\varphi_{\gamma \lambda} \colon Y_{\lambda} \longrightarrow
Y_{\gamma}$. Putting everything together compatibly one has the following
diagram which also defines $Y$ (see \cite[section 6]{ah:affinetvar}):
\begin{equation}
  \label{equ:constrcY}
    \begin{tikzcd}
      & X' \arrow[r]
      \arrow[d]
      & X_{\lambda} \arrow[r] \arrow[d]
      & X_{\gamma} \arrow[d] \arrow[r]
      & X \arrow[dd]
      \\ Y  \arrow[r] \arrow[drrrr, bend right=15] \arrow[d, equal]
      & Y' \arrow[r] \arrow[drrr, bend right=10]
      & Y_\lambda \arrow[r] \arrow[drr, bend right=5]
      & Y_\gamma \arrow[dr]
      &
      \\ \textrm{Normal}(\overline{\image(X')})
      & 
      &
      &
      & Y_{0}
    \end{tikzcd}
\end{equation}
where $X' = \varprojlim X_{\lambda}$ and $Y' = \varprojlim
Y_{\lambda}$. It is also known that $Y$ is a good quotient of the torus
action on $X$.

In the previous paragraph, we constructed the $Y$ in the pair $(Y, \plD)$
describing affine variety $X$ with an effective action of the torus. The
construction of the pp-divisor $\plD$ is not relevant to this paper.

\section{Some results about Multihomogeneous spaces}

\subsection{Sheaves associated to multigraded modules}
Consider a finitely generated abelian group $D$ and let $A$ be a $D$-graded
ring. Suppose $M = \bigoplus_{d \in D} M_d$ is a $D$-graded $A$-module. Just
as in the case of quasicoherent sheaves of modules over $\Proj$ of a $\N$-graded ring
\cite[definition before proposition 5.11, page 116]{hartshorne:alggeom},
we can construct $\cohshv{M}$. We sketch some details to fix notation
and show the similarities in the two setups.

In the construction of $\Proj$ of a $\Z$-graded module over a $\N$-graded
ring, by the definition of $\Proj A$ the points correspond to homogeneous
prime ideals in the defining graded ring which do not contain the whole of
the irrelevant ideal. However this is no longer true for a multihomogeneous
space and a point $P$ may correspond to an ideal which is not a prime.

Let $A$ be a $D$-graded ring and $M$ be a $D$-graded coherent module on $A$ with the usual
condition that $A_d M_{d'} \subset M_{d + d'}$. Since the points $p$ in
$\mProj A$ correspond to graded ideals $I_p$ in $A$ such that the homogeneous
elements in the complement $A \setminus I_p$ form a multiplicatively closed
set, it is still true that the stalk of the structure sheaf at $p$ is given
by $A_{(I_p)}$ (see remark \ref{rem:ptsinmhs}). One can now define
$\widetilde{M}$ in the same way by associating to $U \subset \mProj A$, the
$\strshf{\mProj A}(U)$-module of sections $s \colon U \to \coprod_{p \in U}
M_{(I_p)}$ satisfying the usual condition that locally such $s$ should be
defined by a single element of the form $m / a$ with $m \in M$ and
$a \in A$ but not in any of the ideals $I_p$. These modules are coherent
under some mild conditions, as we state below. Note that, given a $D$-graded
$A$-module $M = \bigoplus_{d \in D} M_d$ and an $e \in D$, one can define a
graded module $M(e)$ where as $A$-modules $M(e) = M$, but $M(e)_d =
M_{d + e}\ \forall d \in D$.

\begin{lemma}
  \label{lem:cohshvms}
  Suppose $D$ is a finitely generated abelian group and $A$ is a $D$-graded
  integral noetherian ring. Then for $X = \mProj A$, the following hold
  \begin{enumerate}[nosep, label=(\textit{\alph*})]
    \item \label{ite:AtldisOX}
      $\widetilde{A} = \strshf{X}$. This allows us to define
      \begin{equation*}
	\strshf{X}(d) := \widetilde{A(d)}.
      \end{equation*}
      $\strshf{X}(d)$ is a coherent sheaf.
    \item \label{ite:MDpfMftd}
      For a  $D$-graded $A$-module $M$, $\widetilde{M}$ is
      quasi-coherent and $\restrict{\widetilde{M}}{D_+(f)} \cong
      \widetilde{M_{(f)}}$ for any relevant element $f \in A$, where $\widetilde{M_{(f)}}$
      is the sheaf of modules over $\Spec A_{(f)}$ corresponding to the module $M_{(f)}$,
      the degree zero elements in $M_f$. Moreover, $\widetilde{M}$ is coherent whenever $M$ is finitely generated.
      \item The functor  $M \rightarrow \widetilde{M}$ is an covariant exact functor from category of $D$-graded $A$-modules to category of quasi-coherent $\strshf{X}$-modules, and commutes with direct limits and direct sums. 
  \end{enumerate}
\end{lemma}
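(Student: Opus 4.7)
The plan is to prove \ref{ite:MDpfMftd} first and deduce \ref{ite:AtldisOX} from it. Fix a relevant element $f \in A$. I would start by constructing a natural morphism $\varphi_f \colon \restrict{\widetilde{M}}{D_+(f)} \longrightarrow \widetilde{M_{(f)}}$ of sheaves on $D_+(f) = \Spec A_{(f)}$. A local section of $\widetilde{M}$ has the form $m/a$ with $m \in M_d$, $a \in A_d$ and $a \notin I_p$ at each relevant $p$; to land in $M_{(f)}$ I use the periodicity of $A_f$: with $N = [D : D']$ and an invertible homogeneous $u \in A_f$ of degree $Nd$, the identity
\begin{equation*}
  \frac{m}{a} \;=\; \frac{m \cdot a^{N-1}/u}{a^{N}/u}
\end{equation*}
presents $m/a$ as a quotient of a degree-zero element of $M_f$ by a degree-zero element of $A_f$, and it is straightforward to check that this expression is independent of the auxiliary choice of $u$ and compatible with restriction.

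Next, I would check that $\varphi_f$ is an isomorphism by passing to stalks. At $p \in D_+(f)$ the source stalk is $M_{(I_p)}$ (the degree-zero part of the homogeneous localization of $M$ at $I_p$, as described in the paragraph preceding the lemma), while the target stalk is $(M_{(f)})_{\mathfrak{p}}$ with $\mathfrak{p} := I_p \cap A_{(f)}$. The tautological assignment $\mu/\alpha \mapsto \mu/\alpha$ supplies a candidate inverse, and mutual inversion follows directly from the explicit formula above, exactly as in the classical Hartshorne argument for $\Proj$ of an $\mathbb{N}$-graded ring.

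For the coherence statement, Lemma~\ref{lem:noethcnd} yields that $A_0$ is noetherian and $A$ is finitely generated over $A_0$, and then Proposition~\ref{pro:unclfntp} makes $X = \mProj A$ locally noetherian. Since $\setl{D_+(f)}$ is an affine open cover, it suffices to prove that $M_{(f)}$ is a finitely generated $A_{(f)}$-module for every relevant $f$, and this I expect to be the main obstacle. My approach: since $M_f$ is a finitely generated $A_f$-module, take homogeneous generators $m_1, \dots, m_k$ with $\deg m_i = d_i$, so that $M_{(f)} = \sum_i (A_f)_{-d_i} m_i$ and the problem reduces to showing $(A_f)_d$ is finitely generated over $A_{(f)}$ for each $d \in D$. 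Using periodicity, pick invertible homogeneous $u_1, \dots, u_r \in A_f$ whose degrees form a $\Z$-basis of $D'$, producing an identification $A_f' := \bigoplus_{d' \in D'}(A_f)_{d'} \cong A_{(f)}[T_1^{\pm 1}, \dots, T_r^{\pm 1}]$ with each $(A_f')_{d'}$ free of rank one over $A_{(f)}$. Every homogeneous element $a \in A_f$ satisfies $a^{[D:D']} \in A_f'$, so $A_f$ is integral over $A_f'$; combined with $A_f$ being of finite type over $A_{(f)} \subset A_f'$ (since $A$ is of finite type over $A_0$), this forces $A_f$ to be module-finite over $A_f'$, and the finite generation of each $(A_f)_d$ over $A_{(f)}$ drops out by grading considerations.

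Finally, specializing \ref{ite:MDpfMftd} to $M = A$ yields $\restrict{\widetilde{A}}{D_+(f)} \cong \widetilde{A_{(f)}} = \strshf{D_+(f)}$, and gluing over the cover $\setl{D_+(f)}$ gives $\widetilde{A} = \strshf{X}$; the sheaf $\strshf{X}(d) := \widetilde{A(d)}$ is then coherent by \ref{ite:MDpfMftd} applied to the finitely generated $A$-module $A(d)$.
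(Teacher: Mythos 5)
Your proof is correct and is exactly the argument the paper has in mind: the paper omits the proof entirely, remarking only that it ``follows almost by definition'' and mirrors Hartshorne's Proposition 5.11, and your periodicity trick $m/a = (ma^{N-1}/u)/(a^{N}/u)$ together with the module-finiteness of $A_f$ over $A_f'$ is the standard way to carry that argument over to the multigraded setting. The only cosmetic caveat is that the lemma as stated allows $D$ to have torsion, so $D'$ need not be free and the identification $A_f' \cong A_{(f)}[T_1^{\pm 1},\dotsc,T_r^{\pm 1}]$ should be replaced by the weaker (and sufficient) observation that $(A_f)_{d'}$ is free of rank one over $A_{(f)}$ for every $d' \in D'$.
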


The proof follows almost by definition and is very similar to proof of
\cite[proposition 5.11]{hartshorne:alggeom}.  The proof of the next lemma is
also evident.
\begin{remark}
Note that we have used $M$ as a lattice as well as $A$-module. Meaning should be clear from context.
\end{remark}
\begin{remark}
In general, the functor $\widetilde{*}$ is not faithful, even for projective varieties.
\end{remark}

\begin{lemma}
  \label{lem:fghypths}
  Suppose $D$ is a finitely generated abelian group and $A$ is a $D$-graded
  algebra such that $A = A_0[x_1, \dotsc, x_r]$, where $x_i \in A_{d_i}$ are
  homogeneous. Then $\set{d \in D}{A_d \neq 0}$ generate a finite index
  subgroup of $D$ if and only if $\set{d_i}{1 \leq i \leq r}$ does.
\end{lemma}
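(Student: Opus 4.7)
The plan is to prove the stronger assertion that the two subgroups
\[
H_1 = \langle d_1, \dotsc, d_r \rangle \quad \text{and} \quad H_2 = \langle\, \set{d \in D}{A_d \neq 0}\, \rangle
\]
of $D$ are in fact equal, after which the stated equivalence is immediate since $H_1$ and $H_2$ then have the same index in $D$ (so one is of finite index precisely when the other is).

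First I would dispatch the easy inclusion $H_1 \subset H_2$: each generator $x_i$ is a nonzero homogeneous element of $A_{d_i}$, so $d_i \in \set{d \in D}{A_d \neq 0}$ and hence $d_i \in H_2$. For the reverse inclusion $H_2 \subset H_1$ I would use the algebra presentation. Given $d \in D$ with $A_d \neq \setl{0}$, pick any nonzero $f \in A_d$. Writing $f$ as a polynomial in $x_1, \dotsc, x_r$ with coefficients in $A_0$ and then projecting onto the degree-$d$ graded component yields
\[
f = \sum_{\tpl{k}} a_{\tpl{k}}\, x_1^{k_1} \cdots x_r^{k_r},
\]
where $\tpl{k} = (k_1, \dotsc, k_r) \in \N^r$ ranges over tuples with $\sum_i k_i d_i = d$ and $a_{\tpl{k}} \in A_0$. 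Since $f \neq 0$ the sum is nonempty, so at least one such $\tpl{k}$ exists and $d = \sum_i k_i d_i$ lies in $H_1$. Hence every element of the generating set of $H_2$ lies in $H_1$, giving $H_2 \subset H_1$.

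The only mildly delicate point — hardly a genuine obstacle — is the extraction of the degree-$d$ piece: an arbitrary polynomial expression for $f$ in the $x_i$ need not itself be homogeneous, and one must discard the other graded summands before being able to read off the identity $d = \sum_i k_i d_i$. This is pure bookkeeping in the $D$-grading and uses nothing beyond the definitions, consistent with the paper's characterisation of the proof as ``evident''.
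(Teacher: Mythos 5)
Your proof is correct; the paper gives no argument for this lemma (it simply declares the proof ``evident''), and your route --- showing the two subgroups $\langle d_1, \dotsc, d_r\rangle$ and $\langle \set{d \in D}{A_d \neq 0}\rangle$ actually coincide, by projecting a polynomial expression for a nonzero $f \in A_d$ onto its degree-$d$ graded piece to extract a relation $d = \sum_i k_i d_i$ --- is exactly the intended routine verification. The only implicit hypothesis is that each generator $x_i$ is nonzero, which you need for the inclusion $d_i \in \set{d \in D}{A_d \neq 0}$ and which is clearly what the statement intends.
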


This lemma provides a way to ensure one of the points of the hypothesis in
the theorem below.

\begin{theorem}
  \label{the:glsecdAd}
  Suppose $D$ is a \emph{free} finitely generated abelian group and $A =
  \bigoplus_{d \in D} A_d$ is a $D$-graded \emph{integral domain} which is
  finitely generated by homogeneous elements $x_1, \dotsc, x_r \in A$ over
  the ring $A_0$. Also assume that for all $k$, $1 \leq k \leq r$, the set
  $\set{\deg x_i}{1 \leq i \leq r, i \neq k}$ generates a finite index subgroup of $D$. Let $X = \mProj
  A$. Then $\Gamma(X, \strshf{X}(d)) \cong A_d$. Furthermore,
  $\strshf{X}(d)$ is a reflexive sheaf.
\end{theorem}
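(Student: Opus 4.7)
The plan is to cover $X$ by the affine opens $D_+(f_k)$ with $f_k = \prod_{i \neq k} x_i$, compute $\Gamma(X, \mathcal{O}_X(d))$ via the sheaf axiom, and identify the resulting intersection of local sections with $A_d$ by a Hartogs-type extension; reflexivity then follows from the structure of the cover together with normality of $X$.

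In $A_{f_k}$ every $x_i$ with $i \neq k$ is a unit, so the subgroup of degrees of homogeneous units contains $\{\deg x_i : i \neq k\}$, which by hypothesis generates a finite-index subgroup of $D$; hence each $f_k$ is relevant. I then need to verify that $\{D_+(f_k)\}_{k=1}^r$ covers $X$: if $p \in X$ were not in this union, then $f_k \in I_p$ for every $k$, and the multiplicatively closed homogeneous complement (remark~\ref{rem:ptsinmhs}) would force at least two distinct $x_i$'s into $I_p$; the finite-index degree hypothesis would then preclude any relevant $f \notin I_p$, contradicting $p \in \mProj A$. By lemma~\ref{lem:cohshvms}(b), $\mathcal{O}_X(d)|_{D_+(f_k)}$ is the sheaf associated to $A(d)_{(f_k)} = (A_{f_k})_d$, and since $A$ is an integral domain each of these embeds into $K_d := \textup{Frac}(A)_d$. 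The sheaf axiom then gives
\begin{equation*}
\Gamma(X, \mathcal{O}_X(d)) = \bigcap_{k=1}^{r} (A_{f_k})_d \;\subseteq\; K_d.
\end{equation*}

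The heart of the proof is showing this intersection equals $A_d$. One inclusion is immediate; for the other, any $y \in K_d$ with $f_k^{n_k} y \in A$ for each $k$ defines a rational function on $\Spec A$ that is regular on every principal open $D(f_k)$, and hence regular on $U = \Spec A \setminus V(f_1, \ldots, f_r)$. The specific form $f_k = \prod_{i \neq k} x_i$ makes $V(f_1, \ldots, f_r)$ exactly the locus where at least two of the $x_i$ vanish simultaneously, so this closed set has codimension at least two in $\Spec A$; a Hartogs-type extension therefore pushes $y$ back into $A$, and homogeneity yields $y \in A_d$. This is the step I expect to be the main obstacle, because it is where the combinatorial shape of the $f_k$'s must be combined with an $S_2$/normality property of $A$ to extend sections across the codimension-two locus.

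For the reflexivity statement, the coherent sheaf $\mathcal{O}_X(d)$ is torsion free and is locally free on the open $V \subseteq X$ whose complement has codimension at least two (invoking normality of $X$ via proposition~\ref{pro:normal}). Then $\mathcal{O}_X(d) \cong j_{\ast}(\mathcal{O}_X(d)|_V)$ for the inclusion $j \colon V \hookrightarrow X$, and the pushforward of a line bundle from an open with codimension-$\geq 2$ complement on a normal variety is reflexive.
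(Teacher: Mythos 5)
Your proposal takes a genuinely different route from the paper, and the route has gaps that the hypotheses of the theorem cannot close. The paper's own argument is purely algebraic: it covers $X$ by $D_+(f)$ for \emph{all} relevant monomials $f$ (lemma \ref{lem:dplmonlX}), embeds every $(A_f)_d$ into the single localization $A_{x_1\dotsm x_r}$ (legitimate because $A$ is a domain), writes the glued section as $x_1^{\alpha_1}\dotsm x_r^{\alpha_r}f'$ with $f'\in A$, and then uses that the section lies in $A_{x_1\dotsm\hat{x_i}\dotsm x_r}$ --- relevant by the degree hypothesis --- to force each $\alpha_i\geq 0$. No normality or codimension estimate enters, and reflexivity is read off from $\Hom_{\strshf{X}}(\strshf{X}(d),\strshf{X})=\strshf{X}(-d)$.

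Three specific problems with your version. (1) The covering claim is false: with $A=k[x_1,x_2,x_3,x_4]$, $D=\Z^2$, $\deg x_1=\deg x_3=(1,0)$, $\deg x_2=\deg x_4=(0,1)$, the hypotheses of the theorem hold, yet the homogeneous prime $(x_3,x_4)$ is a point of $D_+(x_1x_2)\subset\mProj A$ contained in no $D_+(f_k)$, since every $f_k=\prod_{i\neq k}x_i$ involves $x_3$ or $x_4$. Your argument breaks exactly where you claim that two $x_i$'s lying in $I_p$ ``preclude any relevant $f\notin I_p$'': the hypothesis only controls the omission of \emph{one} generator. This costs you the equality $\Gamma(X,\strshf{X}(d))=\bigcap_k(A_{f_k})_d$; only the inclusion $\subseteq$ survives (restriction to each nonempty $D_+(f_k)$ is injective on the integral scheme $X$), though that inclusion is in fact all you use. (2) The Hartogs step needs $A$ to be $S_2$ or normal, and your reflexivity argument invokes proposition \ref{pro:normal}, which requires $A$ normal --- but the theorem assumes only that $A$ is an integral domain, so this is an added hypothesis, not a proof of the stated result. (3) The claim that $V(f_1,\dotsc,f_r)$ has codimension $\geq 2$ is unjustified: nothing in the hypotheses prevents two of the $x_i$ from sharing an irreducible factor, in which case the locus where two of them vanish simultaneously contains a divisor. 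To repair the argument you should abandon the geometric extension step and carry out the paper's monomial bookkeeping inside $A_{x_1\dotsm x_r}$, which uses only the domain hypothesis and the relevance of each $x_1\dotsm\hat{x_i}\dotsm x_r$.
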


Before proving the theorem, we observe a fact.

\begin{lemma}
  With the notation as in theorem \ref{the:glsecdAd},
  \label{lem:dplmonlX}
  \begin{equation*}
    X = \mProj A = \bigcup_{
      \substack{
	f \colon \text{ is relevant and } \\
	\text{ is a monomial in } x_1, \dotsc, x_r
      }
    } D_+(f).
  \end{equation*}
\end{lemma}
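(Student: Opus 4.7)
The plan is, for every $p \in \mProj A$, to exhibit a relevant monomial $m$ in the $x_i$'s such that $p \in D_+(m)$. By definition, $p$ lies in $D_+(f)$ for some relevant $f \in A$; I write $I_p$ for the associated homogeneous ideal, whose complement among homogeneous elements is multiplicatively closed (remark \ref{rem:ptsinmhs}). Expanding $f = \sum_\alpha c_\alpha x^\alpha$ as an $A_0$-linear combination of monomials of degree $\deg f$, the fact that $I_p$ is an ideal forces at least one monomial $x^{\alpha_0}$ to lie outside $I_p$; consequently every variable $x_i$ occurring in $x^{\alpha_0}$ also lies outside $I_p$. This motivates introducing the set
\[
J = \set{i \in \{1, \dotsc, r\}}{x_i \notin I_p},
\]
which is nonempty and contains the support of $\alpha_0$.

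The heart of the argument is the claim that $\set{d_i}{i \in J}$ generates a finite-index subgroup of $D$; this is where the relevance of $f$ enters crucially. Letting $U \subset D$ be the finite-index subgroup formed by the degrees of homogeneous units of $A_f$, for any homogeneous unit $g/f^k \in A_f$ there exist a homogeneous $h \in A$ and $N \ge 0$ with $gh = f^N$. Since $f^N \notin I_p$ by multiplicative closure of the complement, $g \notin I_p$ (otherwise $gh = f^N$ would lie in $I_p$). Expanding $g$ as an $A_0$-linear combination of monomials and reapplying the additive-ideal reasoning, some monomial summand of $g$ must lie outside $I_p$ and hence involves only variables indexed by $J$; thus $\deg g$ belongs to the $\Z_{\ge 0}$-span of $\set{d_i}{i \in J}$. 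The same holds for $\deg f$, and so $\deg(g/f^k) = \deg g - k \deg f$ lies in the $\Z$-span of $\set{d_i}{i \in J}$. The inclusion $U \subset \Z\text{-span of } \set{d_i}{i \in J}$ then forces the latter to have finite index in $D$.

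With the claim in hand, I would take $m := \prod_{i \in J} x_i$. Then $m \notin I_p$ by multiplicative closure of the complement, and in the localization $A_m$ each $x_i$ with $i \in J$ becomes a homogeneous unit of degree $d_i$, so the subgroup of $D$ spanned by degrees of homogeneous units in $A_m$ contains $\set{d_i}{i \in J}$ and therefore has finite index. Thus $A_m$ is periodic, $m$ is a relevant monomial in $x_1, \dotsc, x_r$, and $p \in D_+(m)$, which is what the lemma asks. The main obstacle is the combinatorial claim in the previous paragraph: it requires threading the relevance of $f$ (a condition on units in the localization $A_f$) through the monomial structure of $A$ and the ideal-theoretic behaviour of $I_p$ into an arithmetic statement about the degrees $d_i$ of the generators; once this translation is in place, the construction of $m$ and the verification of its relevance are routine.
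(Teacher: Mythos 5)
Your proof is correct, but it takes a genuinely different route from the paper's. The paper's argument is a two-line set-theoretic computation: writing a relevant $f$ as a sum of monomials $m_1 + \dotsb + m_t$, it observes that $f \notin P$ forces $m_i \notin P$ for some $i$, and concludes $D_+(f) \subseteq \bigcup_i D_+(m_i)$. That argument silently treats each such $m_i$ as relevant (otherwise $D_+(m_i)$ is not one of the defining charts of $\mProj A$) and never verifies this. Your proof supplies exactly what is missing there: instead of using a monomial summand of $f$, you form the set $J$ of indices with $x_i \notin I_p$ and prove that $\set{\deg x_i}{i \in J}$ spans a finite-index subgroup of $D$, by pushing every homogeneous unit $g/f^k$ of $A_f$ through the relation $gh = f^N$ (valid since $A$ is a domain), using the multiplicative closure of the homogeneous complement of $I_p$ to see $g \notin I_p$, and then reading off that $\deg g$, $\deg f$, and hence every unit degree lies in the $\Z$-span of $\set{\deg x_i}{i \in J}$. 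Taking $m = \prod_{i \in J} x_i$ then yields a monomial that is demonstrably relevant, lies outside $I_p$, and hence has $p \in D_+(m)$. The paper's version is shorter but leaves the relevance of the covering monomials unaddressed; yours is longer but self-contained and actually establishes that relevance, which is the only nontrivial content of the lemma. (Two trivia: your argument nowhere needs the ``remove any one generator'' hypothesis of theorem \ref{the:glsecdAd}, only that $A$ is a domain generated by homogeneous elements over $A_0$; and the degenerate case $J = \emptyset$ cannot occur for $D \neq 0$, since your own unit-degree argument would then force all units of $A_f$ to have degree $0$, contradicting relevance of $f$.)
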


\begin{proof}
  We shall prove this for $D_+(f)$ for every relevant $f$ and the lemma
  will follow. Suppose $f = m_1 + \dotsb + m_t$ where each $m_i$ is a
  monomial. Any point $p$ in $D_+(f)$ corresponds to a homogeneous ideal $P$ in $A$
  such that the set of 
  homogeneous elements in $A \setminus P$ is
  multiplicatively closed. Let $H$ be the collection of all such homogeneous ideals.
  \begin{equation*}
    D_+(f) = \set{P \in H}{f \notin P} \subset \bigcup_{i=1}^t
    \set{P \in H}{m_i \notin P} = \bigcup_{i=1}^t D_+(m_i)
  \end{equation*}
  as was to be proved.
\end{proof}

Now we return to the proof of the theorem.

\begin{proof}[Proof of theorem \ref{the:glsecdAd}]
  Giving an element $t \in \Gamma(X, \strshf{X}(d))$ is the same as giving a
  collection $t_f \in D_+(f) = \Spec A_{(f)}$ for each relevant monomial $f$
  such that they agree on the pairwise intersections: $D_+(f) \cap D_+(g) =
  D_+(fg)$ (see lemma \ref{lem:dpfdpgfg}).

  Suppose $t \in \Gamma(X, \strshf{X}(d))$. For each relevant monomial $f
  \in A$ (which are enough to consider by lemma \ref{lem:dplmonlX}),
  \begin{equation*}
    \restrict{t}{D_+(f)} \in \strshf{X}(d)\biggl( D_+(f) \biggr) =
    \widetilde{A(d)}(D_+(f)) = (A_f)_d,
  \end{equation*}
  the $d$-th component of the $D$-graded ring $A_f$. Thus, for each such
  $f$ write
  \begin{equation*}
    \restrict{t}{D_+(f)} = \frac{p_f}{f^{k_f}}
  \end{equation*}
  where $\deg p_f - k_f \deg f = d$.  Now since $A$ is a domain, each $A_f
  \subset A_{x_1 \dotsm x_r}$, and since the local expressions of $t$ match over
  the intersections, $t$ is of the form $x_1^{\alpha_1} \dotsm
  x_r^{\alpha_r} f'$ with $f' \in A$. Since for each $i$, $x_1 \dotsm
  \hat{x_i} \dotsm x_r$ is relevant, $x_1^{\alpha_1} \dotsm
  x_r^{\alpha_r} f' \in A_{x_1 \dotsm \hat{x_i} \dotsm x_r}$ implies that
  $\alpha_i \geq 0$. This proves that $t \in A$ and therefore, $t \in A_d$.

  Since $\Hom_{\strshf{X}}(\strshf{X}(d), \strshf{X}) = \strshf{X}(-d)$ for
  all $d \in D$, reflexivity of $\strshf{X}(d)$ is clear.
\end{proof}

\begin{example}
  The hypothesis of the above theorem is necessary. For example, consider the ring
  $A = \C[X, Y, Z]$ with $\Z^2$-grading given by
  \begin{align*}
      \deg X &= (0, 1) & \deg Y &= (1, 0) = \deg Z
  \end{align*}
  The scheme $\mProj A$ is covered by two affines $D_+(XY)$ and $D_+(XZ)$. Now consider
  the module $M = A((2, -1))$. Consider the section $YZ / X$ which is defined over
  both $\widetilde{M}(D_+(XY))$ and $\widetilde{M}(D_+(XZ))$. Therefore, 
  $YZ / X \in \Gamma(\mProj A, \widetilde{M})$, whereas $A_{(2, -1)} = 0$.
\end{example}

\subsection{Line bundles on Multihomogeneous spaces}

The reflexive coherent sheaves of modules $\strshf{X}(d)$ will not be line bundles for
every $d \in D$. We give a criterion for these to be line bundles
generalizing the well-known similar results for weighted projective spaces.
Before that we prove a short lemma.

\begin{lemma}
  \label{lem:canchsem}
  Suppose $A$ is a $D$-graded ring for a finitely generated free abelian
  group $D$, generated as an $A_0$-algebra by homogeneous elements $x_1,
  \dotsc, x_r$. Suppose $A^{\times} = A_0^{\times}$.
  Assume that $f$ is a relevant
  monomial in $A$.  Suppose $d \in D_f$, where $D_f$ is the sublattice of $D$
  generated by
  \[
  \set{\deg a}{a \text{ divides } f^N \text{ for some } N > 0}.
  \]
  Then there is a monomial $m$ in $x_1, \dotsc,
  x_r$ and $k \in \N \cup \setl{0}$ such that $\deg(m / f^k) = d$ and $m \divides f^N$ for
  some $N > 0$.
\end{lemma}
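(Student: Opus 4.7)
The plan is to construct the monomial $m$ explicitly by exploiting the structure of $D_f$. Write $f = x_{i_1}^{a_1} \cdots x_{i_s}^{a_s}$ with each $a_j > 0$; since $f/x_{i_j} \in A$, each $x_{i_j}$ divides $f$, so $\deg x_{i_j} \in D_f$. Let $I_f = \set{i}{x_i \mid f^{N_i} \text{ for some } N_i \geq 1}$, so $\setl{i_1,\ldots,i_s} \subseteq I_f$ and $\deg x_i \in D_f$ for every $i \in I_f$. A key preliminary fact: products of such divisors are divisors, i.e.\ $\prod_{i \in I_f} x_i^{b_i}$ divides $f^N$ for $N$ sufficiently large, since the individual factorizations $x_i h_i = f^{N_i}$ combine multiplicatively to give $\prod_i x_i^{b_i} \cdot \prod_i h_i^{b_i} = f^{\sum b_i N_i}$.

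The lemma then reduces to showing every $d \in D_f$ admits a representation $d + k \deg f = \sum_{i \in I_f} b_i \deg x_i$ with $b_i \geq 0$ and $k \geq 0$; then $m = \prod_{i \in I_f} x_i^{b_i}$ is the required monomial. The central step is to establish the inclusion $D_f \subseteq E_f$, where $E_f$ denotes the $\Z$-span of $\set{\deg x_i}{i \in I_f}$. Equivalently, every homogeneous $a \in A$ with $a \mid f^N$ must have $\deg a \in E_f$. I would decompose $a = \sum_\alpha c_\alpha x^\alpha$ into its monomial expansion (each summand of degree $\deg a$) and argue that at least one monomial $x^{\alpha_0}$ with $c_{\alpha_0} \neq 0$ satisfies $x^{\alpha_0} \mid f^{N'}$ for some $N'$; this forces $\text{supp}(\alpha_0) \subseteq I_f$, so $\deg a = \deg x^{\alpha_0}$ is a non-negative combination of $\set{\deg x_i}{i \in I_f}$.

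With $D_f \subseteq E_f$ established, write $d = \sum_{i \in I_f} n_i \deg x_i$ with $n_i \in \Z$. For each $i \in I_f \setminus \setl{i_1,\ldots,i_s}$, the relation $x_i h_i = f^{N_i}$ yields $\deg x_i + \deg h_i = N_i \deg f$ in $D$; applying the monomial-extraction step above to $h_i$ produces a monomial of degree $\deg h_i$ with support in $I_f$, giving a usable relation among the $\deg x_i$. These relations permit iteratively adjusting the representation of $d$ until $n_i \geq 0$ for every $i \in I_f \setminus \setl{i_1,\ldots,i_s}$. Taking $k$ large enough then ensures $n_{i_j} + k a_j \geq 0$ for each $j$, so that $b_i := n_i + k a_i$ (with $a_i = 0$ for $i \notin \setl{i_1,\ldots,i_s}$) are all non-negative, completing the construction.

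The main obstacle is the monomial-extraction step in the second paragraph, which is where the hypothesis $A^{\times} = A_0^{\times}$ enters essentially. My proposed approach: in $A_f$, the element $a$ is a unit of degree $\deg a$, so $(A_f)_{\deg a} = a \cdot A_{(f)}$ is a rank-one free $A_{(f)}$-module; the identity $a = \sum_\alpha c_\alpha x^\alpha$ translates to $\sum_\alpha c_\alpha (x^\alpha / a) = 1$ in $A_{(f)}$, with each $x^\alpha / a \in A_{(f)}$. The hypothesis $A^{\times} = A_0^{\times}$ rules out pathological unit configurations in $A_f$ and forces at least one $x^\alpha / a$ to be a unit in $A_{(f)}$; equivalently, the corresponding $x^\alpha$ is a unit in $A_f$, i.e., divides some power of $f$. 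Justifying this unit-extraction rigorously is the most delicate part of the proof.
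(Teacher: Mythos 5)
The genuine gap is in your ``monomial-extraction'' step, i.e.\ the claim that every homogeneous $a$ dividing a power of $f$ has some monomial $x^{\alpha_0}$ in its expansion that itself divides a power of $f$. Your proposed justification does not work as stated: from $\sum_\alpha c_\alpha\,(x^\alpha/a)=1$ in $A_{(f)}$ you cannot conclude that some $x^\alpha/a$ is a unit, because $A_{(f)}$ need not be local --- in any non-local ring $1$ is a sum of non-units --- and the hypothesis $A^\times=A_0^\times$ constrains units of $A$, not of the localization $A_f$, where $f$ itself has already become a unit. So the containment $D_f\subseteq E_f$, which you correctly identify as the crux, is not established. For what it is worth, the paper's own proof does not establish it either: it asserts in one line that for $f=x_{i_1}^{\alpha_1}\dotsm x_{i_s}^{\alpha_s}$ the lattice $D_f$ is generated by $\deg x_{i_1},\dotsc,\deg x_{i_s}$, which is essentially the same unproved containment. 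You have isolated the right delicate point but not closed it.

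The rest of your argument is, modulo this, essentially the paper's proof with extra baggage. Given $d=\sum_j a_j\deg x_{i_j}$ over the variables occurring in $f$, the paper separates the indices with $a_j<0$, notes that $\prod_{a_j<0}x_{i_j}^{-a_j}$ divides $f^M$ for $M$ large, writes $f^M=\bigl(\prod_{a_j<0}x_{i_j}^{-a_j}\bigr)b$ with $b$ a monomial, and takes $m=\prod_{a_j\geq 0}x_{i_j}^{a_j}\,b$; since $m$ is a monomial in the variables of $f$ only, $m\mid f^N$ is automatic and $\deg(m/f^M)=d$. By enlarging the generating set to all of $I_f$ you are forced into the iterative sign-adjustment using the relations $\deg x_i+\deg h_i=N_i\deg f$, which is workable but unnecessary: restricting from the outset to the variables that actually occur in $f$ makes both the non-negativity of the exponents and the divisibility of $m$ by a power of $f$ immediate.
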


\begin{proof}
  Suppose $f = x_{i_1}^{\alpha_1} \dotsm x_{i_s}^{\alpha_s}$. Then $D_f$ is generated
  by $\setl{\deg x_{i_1}, \dotsc, \deg x_{i_s}}$. Then for $d \in D_f$, there exists
  integers $a_1, \dotsc, a_s$ such that $d = \sum_{j=1}^s a_j d_{i_j}$. Consider
  the element $a = x_{i_1}^{a_1} \dotsm x_{i_s}^{a_s}$. Let $I = \set{k}{1 \leq k \leq s, a_k < 0}$.
  Note that $\prod_{j \in I} x_{i_j}^{-a_j} | f^M$ for some $M > 0$. Let $b \in A$ be such that
  \begin{equation*}
      \prod_{j \in I} x_{i_j}^{-a_j} b = f^M
  \end{equation*}
  Then
  \begin{equation*}
      a = \frac{
      \prod_{j \notin I} x_{i_j}^{a_j} b
      }{
      f^M
      }.
  \end{equation*}
  This completes the proof by taking $m = \prod_{j \notin I} x_{i_j}^{a_j} b$.
\end{proof}

\begin{theorem}
  \label{the:criOXdlb}
  Suppose $X = \mProj A$ is a multihomogeneous space defined for a
  $D$-graded integral domain $A = \bigoplus_{d \in D} A_d$ generated by
  homogeneous elements $x_1, \dotsc, x_r$ over $A_0$. 
  Moreover assume that $A_0$ is a field and $A^{\times} = A_0^{\times}$. Let
  $d \in D_f$ (see lemma \ref{lem:canchsem}) for every relevant element $f \in A$. Then
  $\strshf{X}(d)$ is a line bundle.
\end{theorem}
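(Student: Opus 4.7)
The goal is to show that $\strshf{X}(d)$ is locally trivial of rank one, and by Lemma \ref{lem:cohshvms}\ref{ite:MDpfMftd} and Lemma \ref{lem:dplmonlX} it suffices to exhibit, for every relevant monomial $f$ in $x_1,\dotsc,x_r$, an $A_{(f)}$-module isomorphism
\[
    A_{(f)} \xrightarrow{\;\sim\;} A(d)_{(f)} = (A_f)_d.
\]
The hypothesis $d \in D_f$ applies in particular to such relevant monomials $f$, so Lemma \ref{lem:canchsem} supplies a monomial $m$ in $x_1,\dotsc,x_r$, an integer $k \geq 0$, and some $N > 0$ with $\deg(m/f^k) = d$ and $m \mid f^N$ in $A$. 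The natural candidate for the isomorphism is then multiplication by the generator $m/f^k$:
\[
    \mu \colon A_{(f)} \longrightarrow (A_f)_d, \qquad x \longmapsto \frac{m}{f^k}\cdot x.
\]

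Injectivity of $\mu$ is automatic because $A$, and hence $A_f$, is an integral domain and $m/f^k$ is a nonzero homogeneous element. The interesting point is surjectivity. Given any element $a/f^{\ell} \in (A_f)_d$, I would write formally
\[
    \frac{a}{f^{\ell}} \;=\; \frac{m}{f^k}\cdot\frac{a f^k}{m\,f^{\ell}},
\]
and check that the second factor lies in $A_{(f)}$. The degree calculation $\deg(a f^k) - \deg(m f^{\ell}) = (d + \ell\deg f + k\deg f) - ((d + k\deg f) + \ell\deg f) = 0$ shows it has degree zero; and the divisibility $m \mid f^N$, say $f^N = m c$ with $c \in A$, gives $1/m = c/f^N \in A_f$, so $a f^k/(m f^{\ell}) = a c f^k / f^{\ell+N}$ is genuinely an element of $A_f$. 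Together, these yield an element of $(A_f)_0 = A_{(f)}$ mapping to $a/f^{\ell}$, so $\mu$ is surjective and hence an isomorphism. Thus $\restrict{\strshf{X}(d)}{D_+(f)}$ is free of rank one, which proves $\strshf{X}(d)$ is a line bundle.

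\textbf{Main obstacle.} The essential difficulty is producing the global generator on $D_+(f)$: we need an element of $(A_f)_d$ whose reciprocal also lies in $A_f$, so that multiplication by it is invertible inside $A_{(f)}$. Lemma \ref{lem:canchsem} is tailored precisely for this purpose, and the auxiliary divisibility condition $m \mid f^N$ is exactly what is needed to turn the candidate generator $m/f^k$ into a unit of $A_f$ (up to the power $f^k$ that is already invertible on $D_+(f)$). Everything else is routine degree bookkeeping inside the domain $A_f$; the hypothesis $A^{\times} = A_0^{\times}$ enters only through its role in Lemma \ref{lem:canchsem}, where it ensures there are no ``hidden'' units beyond monomial adjustments.
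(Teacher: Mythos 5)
Your proposal is correct and follows essentially the same route as the paper: reduce to relevant monomials via Lemma \ref{lem:dplmonlX}, invoke Lemma \ref{lem:canchsem} to produce a unit $m/f^k$ of $A_f$ of the right degree (the divisibility $m \mid f^N$ making $1/m \in A_f$), and conclude that multiplication by it is an $A_{(f)}$-module isomorphism between $A_{(f)}$ and $(A_f)_d$. The only cosmetic difference is that the paper applies the lemma to $-d$ and maps $(A_f)_d \to (A_f)_0$, whereas you apply it to $d$ and write down the inverse map; your explicit surjectivity check is, if anything, slightly more complete than the paper's.
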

\begin{proof}
  By lemma \ref{lem:dplmonlX}, we can consider an open cover of $X$ given by
  relevant monomials. Fix a $d$ such that $d \in D_f$ for all relevant
  $f$. And fix an $f$ which is a relevant monomial. On $D_+(f)$,
  \begin{equation*}
    \restrict{\strshf{X}(d)}{D_+(f)} = \widetilde{A_f(d)_{(0)}} =
    \widetilde{\bigl(A(d)\bigr)_{(f)}}.
  \end{equation*}
  by lemma \ref{lem:cohshvms}\ref{ite:MDpfMftd}. We claim that $A(d)_{(f)}
  \cong A_{(f)}$. Note that $1 \in A(d)$ has degree $-d$, which belongs to
  $D_f$ by hypothesis.  Thus by lemma \ref{lem:canchsem}, we can find an
  $m$ such that $m \divides f^N$ for some $N$ and $\deg (m / f^k) = -d$ for
  some $k$. This implies $m / f^k$ is invertible in $A_f$ and $\deg f^k / m
  = d$. Now it is evident that for any element of the form $\prod_{i=1}^r
  x_i^{a_i} / f^{\nu}$ in $A(d)_{(f)}$,
  \begin{equation*}
    \deg_{A(d)} \frac{\prod_{i=1}^r x_i^{a_i}}{f^{\nu}} = 0 \iff \deg_A
    \frac{\prod_{i=1}^r x_i^{a_1}}{f^{\nu}} = d \iff \deg_A
    \frac{\prod_{i=1}^r x_i^{a_1}}{f^{\nu}} \frac{m}{f^k} = 0
  \end{equation*}
  and thus $\frac{\prod_{i=1}^r x_i^{a_1}}{f^{\nu}} \frac{m}{f^k} \in
  A_{(f)}$. Since $m / f^k$ is invertible in $A_f$, this gives an
  isomorphism of $A_{(f)}$-modules. This proves that $\strshf{X}(d)$ is a
  line bundle.
\end{proof}

\begin{example}
  In case of a weighted projective space, $P = \Proj \C[x_0, \dotsc, x_n]$
  with $\deg x_i = d_i$, theorem \ref{the:criOXdlb} reduces to saying
  $\strshf{P}(d)$ is a line bundle if and only if $d$ is divisible by each
  of the $d_i$'s. This is well known [Delorme, remark 1.8].
\end{example}

\section{A relation between a Multihomogeneous space and a $T$-variety}
\label{sec:relnmhst}

To study the relationship, we need a couple of assumptions. We shall explore
them one by one.

\begin{assumption}
  \label{ass:MHSvTvar}
  Let $D \cong \Z^r$ for a natural number $r$ and suppose $A = \bigoplus_{d \in
  D} A_d$ be a multigraded, noetherian, integral domain such that $A_0 = k$, where $k$ is
  an algebraically closed field of characteristic $0$.
\end{assumption}

\begin{assumption}
  In this section, $Y$ always refers to the vareity constructed in equation \ref{equ:constrcY}
  where, following the notation in assumption \ref{ass:MHSvTvar}, $\Spec A$ is considered as
  a $T$-variety under the action of $\Spec k[D]$.
\end{assumption}

\begin{lemma}
  \label{lem:exrelelt}
  Suppose $\Lambda$ is the GIT fan (see definition \ref{def:ormndetc})
  associated to the $T = \Spec k[D]$ action on $X = \Spec A$ induced by the
  $D$-grading. Suppose $\lambda$ is a full-dimensional cone in the quasi-fan
  $\Lambda$. Then there exists $u \in \relint \lambda$ such that $A_u$
  contains a relevant element.
\end{lemma}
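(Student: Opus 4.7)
My strategy is to construct an explicit relevant monomial in a chosen set of homogeneous generators of $A$ that lies in $A_u$ for a suitable interior lattice point $u$ of $\lambda$. I will carry this out in three steps.

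First, I extract combinatorial data from $A$. By Assumption~\ref{ass:MHSvTvar} and Lemma~\ref{lem:noethcnd}, $A$ is a finitely generated $k$-algebra, so I fix homogeneous generators $f_1, \dotsc, f_n$ of $A$ over $k$ with $d_i := \deg f_i$. Since every homogeneous element of $A$ is a polynomial in the $f_i$ with scalar coefficients, the weight monoid agrees with the submonoid of $D$ generated by $d_1, \dotsc, d_n$, and the weight cone $\omega \subset D \otimes \Q$ is the rational polyhedral cone they span. Because $\Lambda$ has support $\omega$ and $\lambda$ is full-dimensional in $\Lambda$, both $\lambda$ and $\omega$ have dimension $\rank D$, which forces $\relint\lambda \subseteq \relint\omega$.

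Second, I pick any $v \in D \cap \relint\lambda$ (nonempty because $\lambda$ is full-dimensional) and produce a representation $v = \sum_i c_i d_i$ with every $c_i \in \Q_{>0}$. Since $v$ lies in the interior of $\omega$ and $\sum_i d_i \in \omega$, there is some $\epsilon \in \Q_{>0}$ with $v - \epsilon \sum_i d_i \in \omega$; writing $v - \epsilon \sum_i d_i = \sum_i a_i d_i$ with $a_i \geq 0$ gives $c_i = a_i + \epsilon > 0$. Clearing denominators by some $N \in \Z_{>0}$, I set $u := Nv$, which still lies in $D \cap \relint\lambda$, and $e_i := Nc_i \in \Z_{>0}$.

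Finally, I take $f := \prod_{i=1}^n f_i^{e_i} \in A_u$, which is nonzero since $A$ is an integral domain, and verify it is relevant. Each $f_i$ divides $f$, so each $f_i$ becomes a unit in $A_f$; consequently the subgroup $D' \subseteq D$ consisting of degrees of homogeneous units in $A_f$ contains every $d_i$. Full-dimensionality of $\omega$ forces $\{d_1, \dotsc, d_n\}$ to span $D \otimes \Q$, so $D'$ has finite index in $D$. Thus $A_f$ is periodic and $f$ is the desired relevant element of $A_u$. I do not anticipate a serious obstacle; the only nontrivial ingredient is the elementary convex-geometric fact that an interior point of a full-dimensional rational polyhedral cone admits a strictly positive representation in any generating set, after which the relevance check is immediate.
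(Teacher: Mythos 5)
Your proof is correct, but it takes a genuinely different route from the paper's. The paper works directly with the rays of $\lambda$: since each ray $\rho \in \lambda(1)$ of a GIT cone is itself an orbit cone, it contains a lattice point $u_{\rho}$ with $A_{u_{\rho}} \neq \setl{0}$; choosing $f_{\rho} \in A_{u_{\rho}}$ and setting $f = \prod_{\rho} f_{\rho}$, full-dimensionality makes the $u_{\rho}$ span $D \otimes \Q$, and the same relevance check you perform (the degrees $\pm u_{\rho}$ of homogeneous units in $A_f$ generate a finite-index subgroup) finishes the argument, with $\deg f = \sum_{\rho} u_{\rho}$ lying in $\relint \lambda$. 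You instead use finite generation of $A$ over $k$ (via Lemma \ref{lem:noethcnd}) together with the elementary convex-geometric fact that an interior lattice point of the full-dimensional weight cone admits a strictly positive rational representation in the generator degrees, so that a monomial in the generators with all exponents positive lands in $A_u$ and inverts every generator. The paper's argument is shorter and leans on the structural input that rays of GIT cones are orbit cones; yours avoids that input entirely and, as a bonus, produces a relevant element in $A_u$ for any prescribed, sufficiently divisible $u \in \relint \lambda$ rather than merely for some $u$. Both proofs hinge on the same closing observation: inverting a homogeneous element whose divisors' degrees span $D \otimes \Q$ makes $A_f$ periodic.
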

\begin{proof}
  By the definition of a quasi-fan, each of the rays $\rho \in \lambda(1)$ is also
  an orbit cone and hence there exists an $u_{\rho} \in \rho \cap D$ such that
  $A_{u_{\rho}} \neq \setl{0}$.

  Since $\lambda$ is full dimensional, $\sizeof{\lambda(1)} \geq \dim
  \lambda$ and hence ($\lambda$ be a strongly convex polyhedral cone)
  $\set{u_{\rho}}{\rho \in \lambda(1)}$ is a spanning set of $D$ over
  $\Q$. Choose a homogeneous $f_{\rho} \in A_{u_{\rho}}$ for each $\rho$ and
  consider $f = \prod_{\rho \in \lambda(1)} f_{\rho}$.

  We claim that $f$ is relevant. This follows as once $f$ is inverted, the
  degrees of units in $A_{f}$ contains $\set{\pm u_{\rho}}{\rho \in
  \lambda(1)}$ and hence $[D : D_f] < \infty$, where $D_f$ is defined in
  the statement of theorem \ref{the:criOXdlb}.
\end{proof}

\begin{theorem}
  \label{the:birYmHSA}
  Under the assumption \ref{ass:MHSvTvar}, the torus $T = \Spec k[D]$ acts 
  on $X = \Spec A$ giving $X$ a structure of
  a $T$-variety which, suppose, is represented by $(Y, \mathfrak{D})$.
  Then $Y$ and $\mProj A$ are birational.
\end{theorem}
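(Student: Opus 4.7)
The strategy is to exhibit a common dense affine open of the form $\Spec A_{(f)}$ inside both $Y$ and $\mProj A$, for a suitably chosen relevant element $f$, and then propagate birationality along the Altmann--Hausen construction.

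First I would select $f$. Since $X = \Spec A$ is a $T$-variety, the $T$-action is effective, so the weight monoid $S \subset D$ spans $D$ as a $\Q$-vector space and the weight cone $\omega \subset D_{\Q}$ is full-dimensional. Consequently the GIT quasi-fan $\Lambda$ must contain a full-dimensional cone $\lambda$, and Lemma \ref{lem:exrelelt} then produces $u \in \relint \lambda \cap D$ together with a relevant element $f \in A_u$.

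The key identification is that $D_+(f) \subset Y_\lambda$ and $D_+(f) \subset \mProj A$ are both canonically $\Spec A_{(f)}$. The second assertion is immediate from the definition of $\mProj$. For the first, recall from the discussion preceding diagram \eqref{equ:constrcY} that $Y_\lambda = \Proj R$ with $R = \bigoplus_{r \in \Z} A_{ru}$ and $f \in R_1$, so $D_+(f) \subset Y_\lambda$ is $\Spec (R_f)_0$. The identification $(R_f)_0 = A_{(f)}$ boils down to the observation that an element $a/f^k \in R_f$ has $\Z$-degree $0$ precisely when $a \in A_{ku}$, which is exactly the condition that $a/f^k$ be a $D$-degree zero element of $A_f$.

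Once $\Spec A_{(f)}$ sits inside both schemes as a dense open, the two have the same function field and are therefore birational. Density follows from irreducibility of both $Y_\lambda$ and $\mProj A$, which in turn follows from $A$ being an integral domain together with Lemma \ref{lem:dpfdpgfg} (the affine covers by $\Spec A_{(g)}$ pairwise intersect non-trivially). Combined with the birational morphism $Y \to Y_\lambda$ supplied for every full-dimensional GIT cone by the Altmann--Hausen construction, this yields the birationality of $Y$ and $\mProj A$. The main subtlety is the identification $(R_f)_0 = A_{(f)}$, which uses in an essential way that $u$ lies in the relative interior of a full-dimensional cone of $\Lambda$; for $u$ on a proper face, the same computation would only identify $D_+(f) \subset Y_\lambda$ with an open subscheme of $\Spec A_{(f)}$, not with the whole thing.
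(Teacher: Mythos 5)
Your proof is correct and follows essentially the same route as the paper: choose a relevant $f$ with $\deg f \in \relint\lambda$ via Lemma \ref{lem:exrelelt}, recognize $\Spec A_{(f)}$ as a common dense open of $Y_\lambda$ and $\mProj A$, and finish with the birationality of $Y \to Y_\lambda$ from the Altmann--Hausen construction. The only difference is cosmetic: where the paper compares $\smstbl{X}{u}$ and $\Spec A_f$ upstairs and descends through the geometric quotients in a diagram, you make the ring-level identification $(R_f)_0 = A_{(f)}$ explicit, which is a slightly more careful rendering of the same argument.
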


\begin{proof}
  Let $\Lambda$ be the GIT fan and $\lambda$ be a cone of maximal dimension.
  Choose a relevant $f$ using lemma \ref{lem:exrelelt} such that $\deg f \in
  \relint \lambda$. Suppose $u = \deg f$. Note that $\Spec A_f
  \hookrightarrow \Spec A$ is a $T$-equivariant embedding.
  On the other hand, consider $\smstbl{X}{u} \cap \Spec A_f$. Clearly
  both being open irreducible subsets of $\Spec A$, they are birational. Now
  the result follows from the following commutative diagram:
  \begin{equation*}
    \begin{tikzcd}
      &
      X_{ss} \arrow[d]  &
      X' = \Spec A_f \arrow[l, hook] \arrow[d] \arrow[r, hook] &
      X = \Spec A \arrow[d] \\
      Y \arrow[r] &
      Y_{\lambda} &
      U \arrow[l, hook] \arrow[r, hook]
      & \mProj A
    \end{tikzcd}
  \end{equation*}
  where the first two vertical maps are geometric quotients (by remark
  \ref{rem:frelgmqt}). The rightmost vertical map restricted to the complement of the irrelevant subscheme is a geometric quotient. Note that $Y \longrightarrow Y_{\lambda}$ is birational
  follows from \cite[lemma 6.1]{ah:affinetvar}. This
  proves that $Y$ and $\mProj A$ are birational.
\end{proof}

In the rest of this section, we shall explore conditions under which they
become isomorphic.

\begin{remark}
  It is not always true that $Y$ and $\mProj A$ considered above are isomorphic. For example, take a divisorial variety which does not admit an ample line bundle, but does admit a family of ample line bundles. Such a variety corresponds to a multihomogeneous space which is not projective. But the corresponding $Y$ will be projective by construction.
\end{remark}

\begin{assumption}
  \label{ass:genbyray}
  Suppose $\lambda = \omega$, i.e.\ the GIT fan contains only one full dimensional
  cone and its faces. Assume that $A$ is generated by $\bigcup_{u \in R}
  A_u$ where $R = \bigcup_{\rho \in \lambda(1)} \rho$.
\end{assumption}

\begin{proposition} \label{pro:isomYmPrA}
  Assume \ref{ass:MHSvTvar} and \ref{ass:genbyray}. Assume that $\omega$ is simplicial and 
  $A$ is generated by $\set{f_{\rho}}{\rho \in \omega(1)}$ such 
  that $\deg f_{\rho} \in \rho \cap D$. Then $Y$, as constructed in equation \ref{equ:constrcY}, and
  $\mProj A$ are isomorphic. In fact, both of them are projective.
\end{proposition}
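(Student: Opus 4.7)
My plan is to compute $\mProj A$ and $Y$ explicitly and observe that each equals $\Spec k$; this will deliver the isomorphism and projectivity at once. The first step is to recognize that $A$ is a polynomial ring. Since $\omega$ is simplicial and full-dimensional in $D \otimes \Q$, it has exactly $r = \rank D$ rays, and by hypothesis $A$ is generated over $A_0 = k$ by $r$ homogeneous elements $f_\rho$ whose degrees $w_\rho$ lie on distinct rays of $\omega$. These degrees therefore form a $\Q$-basis of $D \otimes \Q$ and are in particular $\Z$-linearly independent. Any polynomial relation among the $f_\rho$ in $A$ splits into pieces of distinct $D$-degrees, each a scalar multiple of a single monomial $\prod_\rho f_\rho^{a_\rho}$ (no two exponent vectors yield the same degree); since $A$ is an integral domain no such monomial vanishes, so all coefficients must be zero. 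Hence $A \cong k[X_1, \ldots, X_r]$ as $D$-graded $k$-algebras, with $\deg X_i = w_i$.

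Next I would identify $\mProj A = \Spec k$. A monomial $m = \prod X_i^{a_i}$ is relevant iff the $\Z$-span of $\set{w_i}{a_i > 0}$ has finite index in $D$, which by $\Q$-linear independence of $\{w_i\}$ is equivalent to $a_i > 0$ for every $i$. So every relevant monomial is divisible by $X_1 \cdots X_r$, and lemmas \ref{lem:dplmonlX} and \ref{lem:dpfdpgfg} give
\begin{equation*}
    \mProj A = D_+(X_1 \cdots X_r) = \Spec A_{(X_1 \cdots X_r)}.
\end{equation*}
The ring $A_{(X_1 \cdots X_r)}$ is the degree-zero part of $k[X_1^{\pm 1}, \ldots, X_r^{\pm 1}]$, and a Laurent monomial $\prod X_i^{a_i}$ has degree $\sum a_i w_i$, which vanishes only when all $a_i = 0$. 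Thus $A_{(X_1 \cdots X_r)} = k$ and $\mProj A = \Spec k$, which is trivially projective.

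Since $\omega$ is the unique maximal cone of the GIT fan by assumption \ref{ass:genbyray}, the diagram \eqref{equ:constrcY} collapses and $Y$ coincides with $Y_\omega = \Proj \bigoplus_{n \geq 0} A_{nu}$ for any $u \in \relint \omega \cap D$; choosing $u = w_1 + \cdots + w_r$, linear independence makes $A_{nu}$ one-dimensional, spanned by $(X_1 \cdots X_r)^n$, so $\bigoplus_n A_{nu} \cong k[t]$ and $Y = \Proj k[t] = \Spec k$. Alternatively, theorem \ref{the:birYmHSA} already delivers $Y$ birational to $\mProj A = \Spec k$, and a normal irreducible semiprojective variety with function field $k$ must itself be $\Spec k$. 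Either route yields $Y \cong \mProj A = \Spec k$, both projective. The crux of the argument is the use of $\Q$-linear independence of the weights $w_\rho$, supplied by the simpliciality of $\omega$, both to characterize the relevant monomials and to collapse the degree-zero localization to $k$; once this is in place, the computations of the two sides are parallel and elementary.
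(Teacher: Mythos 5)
You prove the statement by a different and more explicit route than the paper, and your argument is correct under the literal reading of the hypothesis. The paper never computes either side: it covers $\mProj A$ by the charts $D_+\bigl(\prod_{\rho} f_{\rho}\bigr)$, identifies $\mProj A$ with $\Proj$ of the Veronese-type subring $A_{(u)} = \bigoplus_{n \geq 0} A_{nu}$ for $u$ a multiple of $\sum_{\rho} \deg f_{\rho}$, observes that this subring is generated in degree one, and quotes \cite[6.1]{ah:affinetvar} to identify $\Proj A_{(u)}$ with $Y = Y_{\omega}$. You instead observe that one generator per ray of a simplicial full-dimensional cone forces $A \cong k[X_1, \dotsc, X_r]$ with $\Q$-linearly independent weights, and you compute both sides to be $\Spec k$; the two arguments in fact agree, since under your analysis $A_{nu}$ is one-dimensional and the paper's $\Proj A_{(u)}$ is $\Proj k[t] = \Spec k$ as well. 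What your computation buys is the (somewhat deflating) observation that the proposition as literally stated concerns only the degenerate case of a point mapping isomorphically to a point; this strongly suggests the authors intended to allow several generators on each ray, as in assumption \ref{ass:genbyray} and as the plural ``collection of relevant monomials'' in their proof indicates, and under that weaker hypothesis your first step ($A$ a polynomial ring) fails while the paper's Veronese argument is the one that survives. Two small points to tighten: justify that $\deg f_{\rho} \neq 0$ (if some $\deg f_{\rho} = 0$ then $f_{\rho} \in A_0 = k$ and the remaining weights could not span, contradicting the full-dimensionality of $\omega$ built into assumption \ref{ass:genbyray}); and note that with $\Q$-linearly independent weights every homogeneous element of $A$ is a scalar multiple of a monomial, so you do not even need lemma \ref{lem:dplmonlX} to reduce to monomial charts.
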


\begin{proof}
  Under the given conditions, there exists a collection of relevant monomials
  $\prod_{\rho \in \omega(1)} f_{\rho}$ which have degree $u = nu'$ where $u' = \sum_{\rho} u_{\rho}$,
  $n \in \N$ and
  \[
  \mProj A = \bigcup D_+\left(\prod_{\rho \in \omega(1)} f_{\rho}\right)
  \]
  Consider $A_{(u)} = \bigoplus_{n \geq 0} A_{nu}$. It is generated by $A_u = (A_{(u)})_1$.
  Therefore, $\mProj A = \Proj A_{(u)} \cong Y$ (see \cite[6.1]{ah:affinetvar}).
\end{proof}

\begin{remark}
  In the special case when $A = k[X_1, \dotsc, X_n]$ with $\deg X_i \in \Z^d$, the affine space becomes a $T$-variety with the action of a $d$-dimensional torus. Assume that this action is effective. Then then we know that the $Y$ one gets from the description of the $T$-variety is normal and projective. It is difficult to characterize these further.
\end{remark}

\begin{corollary}
  The hypothesis of proposition \ref{pro:isomYmPrA} holds if and only if $\mProj A$ is a product of weighted projective spaces. Thus,
  $Y$ and $\mProj A$ constructed above are isomorphic if and only if $\mProj A$ is a product of weighted projective spaces.
\end{corollary}

\begin{proof}
  In the case of projective spaces and weighted projective spaces, the weight cone is the only full dimensional cone in the GIT fan. Also, if $X$ and $Y$ are varieties where the weight cones are the only full dimensional cones in their GIT fans, then the same is true for $X \times Y$.
  
  The other direction follows easily.
\end{proof}


We can not weaken the hypothesis of the above proposition \ref{pro:isomYmPrA}. Here is an example of an affine toric variety $X$ and a subtorus $T$ such that corresponding varieties $Y$ and $\mProj A$, where $A$ is the algebra of global sections of $X$, are not isomorphic. 
\begin{example}[\cite{ah:affinetvar}, example 11.1]
Take the affine toric variety $X = k^4$ associated to the canonical cone $\delta:= (\Z_{\geq 0})^4$ in $N_X = \Z^4$ and consider the subtorus $T := {k^{*}}^2$ action on $X$ given in standard coordinates by the embedding $t =(t_1,t_2) \rightarrow (t_1^4, t_1^3,t_2,t_1^{12}t_2^{-1})$. Then we have the following short exact sequence of lattices:

\begin{equation*}
    0 \xrightarrow{} N_T \xrightarrow{F} N_X \xrightarrow{P} N_Y \xrightarrow{} 0,
\end{equation*}
where $N_T$ is the lattice of one parameter subgroups of $T$ and $N_Y := N_X / N_T$ is the quotient lattice. 
We shall also consider a section $s \colon N_X \longrightarrow N_T$. The linear maps are described by
\begin{equation*}
           F = \begin{bmatrix}
            4 & 0\\
            3 & 0 \\
            0 & 1 \\
            12 & -1
           \end{bmatrix},
           \quad
           P = \begin{bmatrix}
           3 & 0 & -1 & -1\\
           0 & 4 & -1 & -1
           \end{bmatrix}
           \quad \text{and} \quad
           s = 
           \begin{bmatrix}
             1 & -1 & 0 & 0 \\
             0 & 0 & 1 & 0
           \end{bmatrix}.
\end{equation*}

Let $\Sigma_Y$ be the coarsest fan in $(N_Y)_{\Q}$ generated by $P(\delta_0)$ where $\delta_0$ are faces of $\delta$. The maximal cones of $\Sigma_Y$ are given by 
\begin{equation*}
    \sigma_1 = \langle (1,0), (0,1) \rangle,  \hspace{5pt} \sigma_2 = \langle (0,1), (-1,-1) \rangle \text{ and } \hspace{5pt} \sigma_3 = \langle (-1,-1), (1,0) \rangle.
\end{equation*} 
Then the toric variety $Y$ is $\mathbb{P}^2$ and there exist a pp-divisor $\plD$ over $Y = \mathbb{P}^2$ such that the $T$-variety $(X,T)$ is represented by the pair $(Y,\plD)$. 

Now the algebra of global sections $A = k[x_1, x_2, x_3, x_4]$ of $X$ has  a gradation by $M_T = \Z^2$ given by the $\text{deg}$ map in the following short exact sequence 
\begin{equation*}
    0 \xrightarrow{} M_Y \xrightarrow{\tilde{P}} M_X \xrightarrow{\tilde{F}} M_T \xrightarrow{} 0,
\end{equation*}
where 
\begin{equation*}
          \tilde{P} = \begin{bmatrix}
           3 & 0\\
           0 & 4\\
           -1 & -1\\
           -1 & -1
           \end{bmatrix}
           \quad \text{ and }\quad
            \tilde{F} = \begin{bmatrix}
            4 & 3 & 0 & 12\\
            0 & 0 & 1 & -1 \\
           \end{bmatrix}.
\end{equation*}
Let $I = \{1,2,3,4\}$ be an index set. Then $\deg x_1 = (4,0), \deg x_2 = (3,0), \deg x_3 = (0,1) \text{ and } \deg x_4 = (12,-1)$ in $M_T$. Let $\text{pr}_i:M_X \rightarrow \Z, i \in I$ be the projections and $\rho_i := \text{pr}_i|_{M_Y} \in N_Y$.
Then we have four rays $\rho_1 = (1,0)\R$, $\rho_2 = (0,1)\R$ and $\rho_3 = \rho_4 =(-1,-1)\R$ generated by primitive vectors. Then by remark \ref{rem:relconecorr}, a monomial $f=x_ix_j \in A$ where $i, j \in I$ is relevant if and only if the cone $\sigma_f =\langle \rho_i : i \in I \text{ and }x_i \nmid f \rangle$ is simplicial. Therefore, One can compute that
\begin{align*}
    \mProj A & = \bigcup_{f=x_ix_j \text{ relevant}} D_{+}(f) \\
    & = D_{+}(x_3x_4) \cup D_{+}(x_1x_3) \cup D_{+}(x_2x_3) \cup D_{+}(x_1x_4) \cup D_{+}(x_2x_4)
\end{align*}
 Note that
 \begin{align*}
     Y = \mathbb{P}^2 & = D_{+}(x_3x_4) \cup D_{+}(x_1x_3) \cup D_{+}(x_2x_3) \\
   &=  D_{+}(x_3x_4) \cup D_{+}(x_1x_4) \cup D_{+}(x_2x_4) 
 \end{align*}
 
 Therefore the multihomogeneous space $\mProj A$ is union of two copies of $\mathbb{P}^2$ glued along open subcheme $D_{+}(x_3x_4)$.
However the canonical map in \ref{the:birYmHSA} identifies $Y$ with either $D_{+}(x_3x_4) \cup D_{+}(x_1x_3) \cup D_{+}(x_2x_3)$ or $ D_{+}(x_3x_4) \cup D_{+}(x_1x_4) \cup D_{+}(x_2x_4)$ in $\mProj A$. And hence the map in \ref{the:birYmHSA} is not an isomorphism. The weight cone $\omega$, generated by $(0,1)$ and $(12,-1)$, is simplicial. The isomorphism fails to hold because the cone $\omega$ is not a GIT cone.
\end{example}
\bibliographystyle{plain}
\bibliography{references}
\end{document}